\theoremstyle{definition}
\newcommand{\id}{\mathit{id}}
\begin{document}



\title{Burau maps and twisted Alexander polynomials}
\author{Anthony Conway}
\affiliation{Section de Math\'ematiques, Universit\'e de Gen\`eve, 2-4 rue du Li\`evre, 1227 Acacias, Geneva, Switzerland}
\maketitle

\begin{abstract}
The Burau representation of the braid group can be used to recover the Alexander polynomial of the closure of a braid. We define twisted Burau maps and use them to compute twisted Alexander polynomials.
\end{abstract}

\amsprimary{57M25} 
\section{Introduction}
\label{sec:intro}

As any link can be obtained as the closure of a braid, one can hope to recover link invariants from the braid groups. This idea was first exploited by Burau \citep{Burau} in order to compute the Alexander polynomial. More precisely, if  
$$ \overline{\mathcal{B}}_t : B_n \longrightarrow GL_{n-1}(\mathbb{Z}[t^{\pm 1}])~$$
denotes the reduced Burau representation of the braid group and~$\beta \in B_n$ is a braid, then Burau showed that 
$$ \Delta_{\hat{\beta}}(t)(t^n-1)=\pm t^m\det(\overline{\mathcal{B}}_{t}(\beta)-I_{n-1})(t-1),$$
where $m \in \mathbb{Z}$, ~$ \Delta_{\hat{\beta}}(t)$ denotes the Alexander polynomial of the link~$\hat{\beta}$ obtained by closing~$\beta$, and~$I_k$ is the identity~$(k\times k)$ matrix. Some years later, Birman \citep{Birman} generalized this result to compute the multivariable Alexander polynomial~$\Delta_{\hat{\beta}}(t_1,...,t_n)$ of the closure of a pure braid~$\beta$. Indeed using the reduced Gassner representation
$$ \overline{\mathcal{B}}_{t_1,\dots,t_n}: P_n \rightarrow GL_{n-1}(\mathbb{Z}[t_1^{\pm 1},\dots,t_n^{\pm 1}])$$
of the pure braid group~$P_n$, Birman showed that
$$ \Delta_{\hat{\beta}}(t_1,...,t_n)(t_1t_2\cdots t_n-1)=\pm t_1^{m_1}t_2^{m_2} \cdots t_n^{m_n} \text{det}(\overline{\mathcal{B}}_{t_1,\dots,t_n}(\beta)-I_{n-1}),$$
for $m_i \in \mathbb{Z}$. A braid~$\beta$ is{\em~$\mu$-colored\/} if each of its $n$ components is assigned an element in~$\{1,2,\dots,\mu\}$, resulting in a sequence~$c=(c_1,c_2,\dots,c_n)$ of integers. If one fixes such a sequence~$c=(c_1,c_2,\dots,c_n)$, one obtains the \textit{colored braid group}~$B_c$ (see Subsection (\ref{sub:braids}) for the precise definition). As $c$ varies, the group $B_c$ interpolates between the braid group~$B_n$ (when~$c=(1,1,\dots,1)$) and the pure braid group~$P_n$ (when~$\mu=n$ and~$c=(1,2,\dots,n)$). In particular, if~$\beta$ is a~$\mu$-colored braid and~$\tau(\hat{\beta})$ denotes the torsion of the $\mu$-colored link~$\hat{\beta}$, then the results of Burau and Birman can be written in a single equation as
$$  \tau(\hat{\beta})(t_1, \dots, t_\mu)(t_{c_1}t_{c_2}\cdots t_{c_n}-1)=\pm t_1^{m_1}t_2^{m_2} \cdots t_\mu^{m_\mu}\det(\overline{\mathcal{B}}_{t_1,\dots,t_\mu}(\beta)-I_{n-1}),$$
with $m_i \in \mathbb{Z}.$ The Alexander polynomial was further generalized (independently by Jiang-Wang \citep{Jiang-Wang} and Lin \citep{Lin}) to the so-called \textit{twisted Alexander polynomial}. This polynomial, denoted~$\Delta_L^\rho$, depends both on a link~$L$ and on a representation~$\rho$ of the group~$ \pi_1(S^3 \setminus L)$. Since then, the twisted Alexander polynomial has proved wildly successful in knot theory and low-dimensional topology. Indeed its applications range from periodic knots \cite{HLN} to knot concordance \cite{KL,KL2,Friedl-Cha}, while also giving lower bounds on the Thurston norm and providing obstructions to fiberedness \cite{Friedl-Kim,FK2} (see \cite{survey} for a survey).

As Kitano  \citep{Kitano} showed that~$\Delta_L^\rho$ can also be interpreted via a twisted torsion~$\tau^\rho(L)$, it is natural to wonder whether the twisted Alexander polynomial may be recovered from braids. The main aim of this paper is to answer this question. 

Given a commutative ring $R$ and a representation~$\rho: F_n \rightarrow GL_k(R)$ of the free group, we will define a reduced twisted Burau map 
$$ \overline{\mathcal{B}}_\rho\colon B_c \rightarrow GL_{(n-1)k}(R[t_1^{\pm 1},\dots, t_\mu^{\pm 1}])$$
which is a twisted analogue of the reduced Burau representation (see Subsection (\ref{sub:reduced}) for a precise definition). Given a colored braid~$\beta \in B_c$, our main theorem relates~$\overline{\mathcal{B}}_\rho(\beta)$ to the twisted torsion~$\tau^\rho({\hat{\beta}})$.

\begin{theorem}
Let~$F_n$ be the free group on~$x_1, x_2,\dots, x_n$ and let~$\beta \in B_c$ be a~$\mu$-colored braid  with~$n$ strands. If~$\rho\colon F_n \rightarrow GL_k(R)$ is a representation which extends to~$\pi_1(S^3 \setminus \hat{\beta})$, then
\[
\tau^\rho(\hat{\beta})(t_1,\dots,t_\mu ) \det\left(\rho(x_1 x_2 \cdots x_n) t_{c_1}t_{c_2}\cdots t_{c_n}-I_{k}\right)= \pm d t_1^{m_1}t_2^{m_2} \cdots t_\mu^{m_\mu} \det(\overline{\mathcal{B}}_\rho(\beta)-I_{(n-1)k}),
 \]
for some $d \in \det(\rho(\pi_1(S^3 \setminus \hat{\beta})))$ and $m_i \in \mathbb{Z}.$
\end{theorem}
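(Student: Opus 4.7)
My plan is to model the exterior $X_{\hat\beta}=S^3\setminus\nu\hat\beta$ by a 2-complex coming from the braid, compute its twisted cellular chain complex, and then read off both sides of the identity from a single matrix. The Artin action realises $\beta$ as an automorphism $\beta_\ast$ of $F_n$, and the closed-braid presentation of $\pi_1(X_{\hat\beta})$ has generators $x_1,\ldots,x_n$ and the $n$ relators $r_i:=\beta_\ast(x_i)x_i^{-1}$. Because $\beta_\ast$ fixes the word $x_1\cdots x_n$, there is an explicit relation among the $r_i$'s showing that exactly one of them is redundant, so one can delete a chosen $r_i$ to obtain a 2-complex $X_\beta$ simple-homotopy equivalent to $X_{\hat\beta}$.

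Next I would use twisted Fox calculus with coefficients $\rho\otimes\alpha\colon F_n\to GL_k(R[t_1^{\pm 1},\ldots,t_\mu^{\pm 1}])$, where $\alpha(x_j)=t_{c_j}$, to write down the cellular chain complex of $X_\beta$. The map $\partial_1$ is by construction the column of blocks $\rho(x_j)t_{c_j}-I_k$, while the map $\partial_2$ is the twisted Jacobian $(\partial r_i/\partial x_j)^{\rho\otimes\alpha}$, which by definition of the unreduced twisted Burau map equals the matrix $\mathcal{B}_\rho(\beta)-I_{nk}$ with one row removed corresponding to the deleted relation. Kitano's formula then expresses $\tau^\rho(\hat\beta)$ as an alternating ratio of minors of this complex, up to multiplication by units of $R[t_1^{\pm 1},\ldots,t_\mu^{\pm 1}]$ and by an element of $\det\rho(\pi_1(X_{\hat\beta}))$, which is exactly where the indeterminacy $\pm d t_1^{m_1}\cdots t_\mu^{m_\mu}$ in the theorem comes from.

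The final step is to match this expression to the reduced Burau determinant $\det(\overline{\mathcal{B}}_\rho(\beta)-I_{(n-1)k})$. Classically, the reduced Burau matrix is obtained from the unreduced one by quotienting out the fixed vector, and the resulting factor in the determinant is $(t_{c_1}\cdots t_{c_n}-1)$, stemming from the identity $\alpha(x_1\cdots x_n)=t_{c_1}\cdots t_{c_n}$. In the twisted setting the same quotienting procedure works, but the fixed ``vector'' now has components that are blocks $\rho(x_1\cdots x_j)t_{c_1}\cdots t_{c_j}$, and the factor extracted from the determinant is $\det(\rho(x_1\cdots x_n)t_{c_1}\cdots t_{c_n}-I_k)$. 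Combining this with the torsion formula of the previous paragraph yields the stated equation.

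The main obstacle will be carrying out this reduction step in the twisted setting while respecting the correct basis-change conventions. One has to verify that the block column operations needed to extract the factor $\rho(x_1\cdots x_n)t_{c_1}\cdots t_{c_n}-I_k$ from $\mathcal{B}_\rho(\beta)-I_{nk}$ introduce no spurious determinant contributions outside the allowed indeterminacy, and that the row and column removed on the Burau side correspond exactly to the relator $r_i$ removed in constructing $X_\beta$ and to the vertex chosen in trivialising $\partial_1$. This bookkeeping is the twisted analogue of the well-known passage from unreduced to reduced Burau, but in the noncommutative setting the argument has to proceed block-by-block rather than entry-by-entry, which is where one also has to be careful about left versus right twisted Fox derivatives.
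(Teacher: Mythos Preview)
Your approach is correct but takes a genuinely different route from the paper. You work directly with the closed-braid presentation $\langle x_1,\ldots,x_n\mid x_i\beta=x_i\rangle$ of $\pi_1(X_{\hat\beta})$, discard a redundant relator, and feed the resulting deficiency-one presentation into Wada's formula. The paper instead adds the braid axis: it writes $X_{\hat\beta}=X_{\hat\beta\cup\partial D_c}\cup(D^2\times\partial D_c)$, uses the natural deficiency-one presentation $\langle g_1,\ldots,g_n,x\mid x^{-1}g_ix=g_i\beta\rangle$ of $\pi_1(X_{\hat\beta\cup\partial D_c})$ (here $g_i=x_1\cdots x_i$ and $x$ is a meridian of the axis), computes $\tau^\rho(\hat\beta\cup\partial D_c)$ from that, and recovers $\tau^\rho(\hat\beta)$ by multiplicativity of torsion, the solid torus contributing precisely the factor $\det(\rho(g_n)t_{c_1}\cdots t_{c_n}-I_k)^{-1}$.

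What you flag as the ``main obstacle'' is in fact already packaged in the paper as Corollary~\ref{cor:reducedfox}: the reduced Burau map is \emph{defined} there via the $g_i$-basis, in which $\tilde{\mathcal{B}}_\rho(\beta)$ is block upper-triangular with last diagonal block $I_k$. If you adopt that basis from the outset, your argument becomes very short: the closed-braid presentation in the $g_i$'s has the vacuous relation $g_n\beta=g_n$, dropping it leaves the Fox matrix $(\overline{\mathcal{B}}_\rho(\beta)-I_{(n-1)k}\mid V)$, and Wada's formula with the $g_n$-column removed gives $\tau^\rho(\hat\beta)=\det(\overline{\mathcal{B}}_\rho(\beta)-I_{(n-1)k})/\det(\rho(g_n)t_{c_1}\cdots t_{c_n}-I_k)$ immediately. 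This is arguably more economical than the paper's proof; the paper's detour through $\hat\beta\cup\partial D_c$ buys a cleaner geometric interpretation of the extra factor (as the torsion of a solid torus) at the cost of the excision and multiplicativity arguments. One small point: your assertion that dropping the redundant relator yields a $2$-complex \emph{simple-homotopy} equivalent to $X_{\hat\beta}$ is stronger than needed and not obviously true in general; it is enough to invoke Wada's theorem that his invariant is independent of the deficiency-one presentation, together with Kitano's identification of it with the torsion.
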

 
The second aim of this article is to study the properties of the twisted Burau maps. In the classical case, the Burau representation can be defined via Fox calculus or by using the homology of covering spaces.  In Subsections (\ref{sub:Burau}) and (\ref{sub:reduced}), we shall investigate to what extent these constructions can be generalized to the twisted case.

The paper is organized as follows. In Section~$\ref{sec:preliminaries}$, we recall the necessary definitions: colored braids, twisted homology, twisted intersection forms, the torsion of a chain complex and the twisted torsion of links. In Section \ref{sec:results}, we introduce the twisted Burau maps (Subsection (\ref{sub:Burau})), the reduced twisted Burau maps (Subsection (\ref{sub:reduced})) and  prove the main result (Subsection (\ref{sub:thm})).

\subsection*{Acknowledgments.} The author wishes to thank warmly his advisor David Cimasoni and Stefan Friedl, as well as Hans Boden for useful conversations.  This work was supported by the NCCR SwissMap, funded by the Swiss FNS.

\section{Preliminaries}
\label{sec:preliminaries}

\subsection{Colored braids}
\label{sub:braids}
Following Birman \citep{Birman}, we start by recalling some well-known properties of the braid group. Afterwards, slightly modifying some conventions of \citep{CT,CC}, we discuss colored braids.

Let $D^2$ be the closed unit disk in $\mathbb{R}^2.$ Fix a set of $n \geq 1$ punctures $p_1,p_2,\dots,p_n$ in the interior of $D^2$. We shall assume that the $p_i$ lie in $(-1,1)=Int(D^2) \cap \mathbb{R}$ and $p_1<p_2<\dots<p_n.$ A \textit{braid with $n$ strands} is an oriented $n$-component one-dimensional submanifold $\beta$ of the cylinder $D^2 \times [0,1]$ whose boundary is $\bigsqcup_{i=1}^n (p_i \times \lbrace 0) \rbrace \sqcup \left(-\bigsqcup_{i=1}^n (p_i \times \lbrace 1 \rbrace) \right)$, and where the projection to $[0,1]$ maps each component of $\beta$ homeomorphically onto $[0,1]$. Two braids $\beta_1$ and $\beta_2$ are \textit{isotopic} if there is a self-homeomorphism of $D^2 \times [0,1]$ which keeps $D^2 \times \lbrace 0,1 \rbrace$ fixed, such that $h(\beta_1)=\beta_2$ and $h|_{\beta_1}:\beta_1 \simeq \beta_2$ is orientation preserving. The \textit{braid group} $B_n$ consists of the set of isotopy classes of braids.  The identity element is given by the \textit{trivial braid} $\lbrace p_1,p_2,\dots, p_n \rbrace \times [0,1]$ while the composition $\beta_1 \beta_2$ consists in gluing $\beta_1$ on top of $\beta_2$ and shrinking the result by a factor $2$ (see Figure \ref{fig:CompositionTwistedOriente}). 

\begin{figure}[h]
\labellist\small\hair 2.5pt
\pinlabel {$x_1$} at 35 750
\pinlabel {$x_2$} at 80 750
\pinlabel {$x_3$} at 125 750
\pinlabel {$z$} at 80 839
\endlabellist
\centering
\includegraphics[width=0.4\textwidth,scale=0.6]{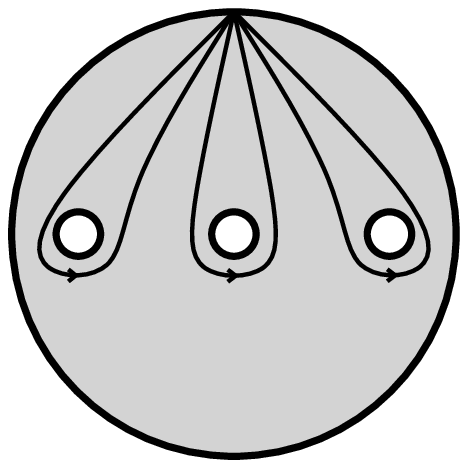}
\caption{The punctured disk $D_3$. }
\label{fig:DiskTwisted}
\end{figure}

The braid group~$B_n$ can also be seen as the set of  isotopy classes of orientation-preserving homeomorphisms of~$D_n :=D^2 \setminus \lbrace p_1,\dots, p_n \rbrace$ fixing the boundary pointwise. Either way, $B_n$ admits a presentation with $n-1$ generators $\sigma_1,\sigma_2, \dots, \sigma_{n-1}$ subject to the relations $\sigma_i \sigma_{i+1} \sigma_i=\sigma_{i+1} \sigma_i \sigma_{i+1}$ for each $i$, and $\sigma_i \sigma_j = \sigma_j \sigma_i$ if $|i-j|>2$. Topologically, the generator $\sigma_i$ is the braid whose $i$-th component passes over the $i+1$-th component. Sending a braid to its underlying permutation produces a surjection from the braid group into the symmetric group. The kernel $P_n$ of this map is called the \textit{pure braid group}. 

 \begin{figure}[h]
\labellist\small\hair 2.5pt
\pinlabel {$\beta$} at 60 720
\pinlabel {$\hat{\beta}$} at 350 720
\endlabellist
\centering
\includegraphics[width=0.4\textwidth,scale=1.2]{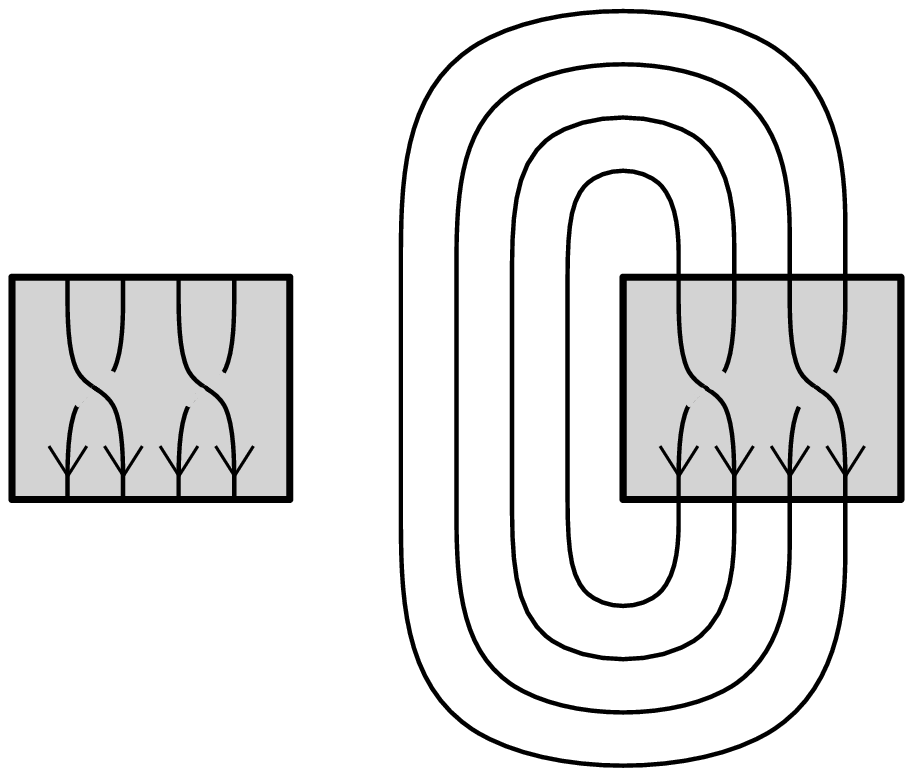}
\caption{The closure of a braid.}
\label{fig:ClosureTwistedOriente}
\end{figure}

Fix a base point $z$ of $D_n$ and denote by $x_i$ the simple loop based at $z$ turning once around $p_i$ counterclockwise for $i=1,2,\dots, n$ (see Figure \ref{fig:DiskTwisted}). The group $\pi_1(D_n)$ can then be identified with the free group $F_n$ on the $x_i.$ 
If $h_\beta$ is a homeomorphism of $D_n$ representing a braid $\beta$, then the induced automorphism $h_{\beta*}$ of the free group $F_n$ only depends on $\beta$. It follows from the way we compose braids that $h_{(\gamma \beta)*}=h_{\beta*}h_{\gamma*}$, and the resulting \textit{right} action of $B_n$ on $F_n$ can be explicitly described by
\[
x_j\sigma_i=
\begin{cases}
x_i x_{i+1} x_i^{-1}  & \mbox{if }  j=i, \\
 x_i                            & \mbox{if }  j=i+1, \\ 
x_j                             & \mbox{otherwise. } \\ 
 \end{cases} 
\] 
For this reason, $\theta_1 \theta_2$ denotes the \textit{left to right} composition of  $\theta_1,\theta_2 \in Aut(F_n)$ i.e. if $x \in F_n$, then $(x)\theta_1 \theta_2=((x)\theta_1) \theta_2.$ Moreover, if $f: F_n \rightarrow G$ is a group homomorphism, then $\beta_*f$ will denote the composition of $f$ with the automorphism induced by $\beta$. With these conventions, if $\beta$ and $\gamma$ are two braids, then $(\beta \gamma)_*f=\beta_*\gamma_*f.$

The \textit{closure} of a braid $\beta$ is the link $\hat{\beta}$ obtained from $\beta$ by adding parallel strands in $S^3 \setminus (D^2 \times [0,1])$ (see Figure \ref{fig:ClosureTwistedOriente}). If~$\beta$ is a braid with $n$ strands, then~$\pi_1(S^3 \setminus \hat{\beta})$ admits a presentation where the $n$ generators $x_1, x_2,\dots,x_n$ are subject to the relations $x_i=x_i \beta $ for $ i=1,2,\dots,n.$ In particular any group homomorphism $f: F_n \rightarrow G$ satisfying $\beta_*f=f$ extends to~$\pi_1(S^3 \setminus \hat{\beta})$. 

\begin{figure}[h]
\labellist\small\hair 2.5pt
\pinlabel {$\beta_1$} at 1 760
\pinlabel {$\beta_2$} at 1 615
\pinlabel {$\beta_1 \beta_2$} at 390 685
\pinlabel {$c$} at 70 809
\pinlabel {$c'$} at 70 708
\pinlabel {$c'$} at 70 667
\pinlabel {$c''$} at 70 566
\pinlabel {$c''$} at 204 600
\pinlabel {$c$} at 204 775
\pinlabel {$c$} at 314 735
\pinlabel {$c''$} at 314 640
\endlabellist
\centering
\includegraphics[width=0.4\textwidth,scale=1.2]{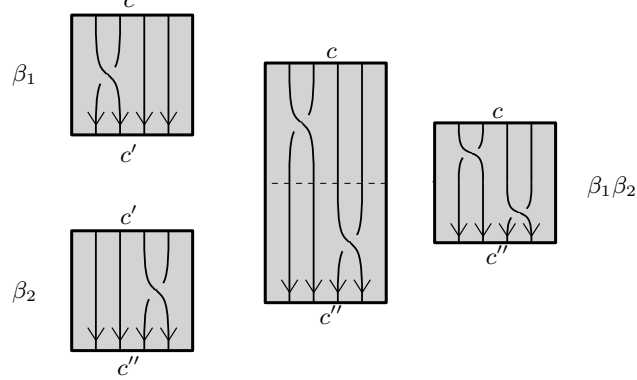}
\vspace*{4mm}
\caption{A $(c,c')$-braid $\beta_1$, a $(c',c'')$-braid $\beta_2$ and their composition, the $(c,c'')$-braid $\beta_1\beta_2$.}
\label{fig:CompositionTwistedOriente}
\end{figure}

A braid~$\beta$ is{\em~$\mu$-colored\/} if each of its components is assigned (via a surjective map) an element in~$\{1,2,\dots,\mu\}$, resulting in a sequence $c=(c_1,c_2,\dots,c_n)$ of integers.  
A~$\mu$-colored braid induces a coloring on the punctures of $D^2 \times \lbrace 0, 1 \rbrace$. For emphasis, we shall denote the resulting punctured disks by~$D_c$ and~$D_{c'}$, and call a~$\mu$-colored braid a~$(c,c')$-braid, where~$c$ and~$c'$ are the sequences of~$ 1, 2,\dots,\mu$ induced by the coloring of the braid. Two colored braids are isotopic if the underlying isotopy is color preserving, and we shall denote by~$\id_c$ the isotopy class of the trivial~$(c,c)$-braid. The composition of a $(c,c')$-colored braid $\beta_1$ with a $(c',c'')$-colored braid $\beta_2$ is the $(c,c'')$-braid $\beta_1 \beta_2$ (see Figure \ref{fig:CompositionTwistedOriente}). Thus, for any sequence~$c$, the set~$B_c$ of isotopy classes of~$(c,c)$-braids is a group which interpolates between the braid group $B_n=B_{(1,1,\dots, 1)}$ and the pure braid group~$P_n=B_{(1,2,\dots,n)}$. Finally, the closure of a $\mu$-colored braid $\beta \in B_c$ is the $\mu$-colored link $\hat{\beta}$ obtained from $\beta$ by adding colored parallel strands in $S^3 \setminus (D^2 \times [0,1]).$

\subsection{Twisted homology groups}
\label{sub:homology}
Next, relying on the exposition of \citep{KL, Friedl-Kim, Friedl-Cha}, we review the construction of twisted homology. Some computations are then made in the case of the punctured disk $D_n$.

Let~$X$ be a connected CW complex endowed with a basepoint $z$ and let $Y$ be a connected CW subspace of $X$. We denote by $p: \tilde{X} \rightarrow X$ the universal cover of $X$ and write $\tilde{Y}=p^{-1}(Y)$. If~$R$ is an integral domain, then any representation~$\varphi: \pi_1(X,z) \rightarrow GL_k(R)$ induces a $(R,\mathbb{Z}[\pi_1(X,z)])$-bimodule structure on~$R^k$, where the right action is given by right multiplication on row vectors. On the other hand, the left action of~$\pi_1(X,z)$ on~$\tilde{X}$ gives rise to a left~$\mathbb{Z}[\pi_1(X,z)]$-module structure on the cellular chain complex~$C_*(\tilde{X},\tilde{Y})$. Following Kirk-Livingston \citep{KL}, the \textit{twisted chain complex} of the pair~$(X,Y)$ is the chain complex of $R$-modules
$$ C_*^\varphi(X,Y;R^k)=R^k \otimes_{\mathbb{Z}[\pi_1(X,z)]}C_*(\tilde{X},\tilde{Y}),$$
and the corresponding \textit{twisted homology groups}~$H_i^\varphi(X,Y;R^k)$ of~$(X,Y)$ are the $R$-modules obtained by taking the homology of this chain complex. If $Y$ is empty, then we write $H_*^\varphi(X;R^k)$ instead of $H_*^\varphi(X,\emptyset;R^k).$ Observe that when $\varphi$ is the trivial one-dimensional representation, then the twisted homology of $X$ coincides with the usual homology of $X$ with coefficients in $R$. 

Following the notation of Friedl-Kim \citep{Friedl-Kim}, we let $H_*^\rho(Y \subset X;R^k)$ be the homology of the chain complex $R^k \otimes_{\mathbb{Z}[\pi_1(X,z)]} C_*(\tilde{Y}).$ Standard arguments then give rise to the long exact sequence 
$$
 \dots \rightarrow H_i^\rho(Y \subset X;R^k) \rightarrow H_i^\rho( X;R^k) \rightarrow H_i^\rho(X,Y;R^k) \rightarrow \dots 
$$
If the basepoint $z$ lies in $Y$, then the composition $\pi_1(Y,z) \rightarrow \pi_1(X,z) \stackrel{\varphi}{\rightarrow} GL_k(R)$ induces twisted homology groups $H_i^\varphi(Y;R^k)$ using the universal cover of $Y$. As $H_i^\varphi(Y \subset X;R^k)$ is isomorphic to $H_i^\varphi(Y;R^k)$ (\cite[Lemma $2.1$]{Friedl-Kim}), the previous long exact sequence yields the long exact sequence
$$
 \dots \rightarrow H_i^\varphi(Y;R^k) \rightarrow H_i^\varphi( X;R^k) \rightarrow H_i^\varphi(X,Y;R^k) \rightarrow \dots 
$$
As the isomorphism type of $H_i^\varphi(X,Y;R^k)$ does not depend on the choice of the basepoint, we will drop it from the notation.

\begin{remark}
\label{rem:shapiro}
If $\rho: \pi_1(X) \rightarrow GL_k(R)$ is a representation,~$N$ is a normal subgroup of~$\pi_1(X)$ and~$\psi: \pi_1(X) \rightarrow \pi_1(X)/N$ is the quotient map, then $\pi_1(X)$ acts on~$R^k \otimes_R R[\pi_1(X)/N]$ via 
$$ (u \otimes v) \cdot \gamma=u \rho(\gamma) \otimes v \psi(\gamma), $$
where $\gamma \in \pi_1(X)$, $u \in R^k$ and $v \in R[\pi_1(X)/N]$. If $R^k \otimes_R R[\pi_1(X)/N]$ is identified with $R[\pi_1(X)/N]^k$ and $\rho \otimes \psi\colon \pi_1(X) \rightarrow GL_k(R[\pi_1(X)/N])$ denotes the resulting representation, then one can form the twisted homology groups $H_*^{\rho \otimes \psi}(X;R[\pi_1(X)/N]^k).$ 

On the other hand, if~$X_N$ is the cover of~$X$ associated to the subgroup~$N$, then the universal covering $\tilde{X}$ also covers $X_N$ with deck transformation group $N$. Restricting the representation $\rho$ to the subgroup $\pi_1(X_N)=N$, one can then consider the twisted homology groups $H_*^\rho(X_N;R^k).$ In this setting, it is known (\citep[Chapter $5$]{DavisKirk}) that
$$H_*^{\rho \otimes \psi}(X; R[\pi_1(X)/N]^k) \cong H_*^\rho(X_N;R^k).$$
In particular, if $\rho$ is the trivial one-dimensional representation, then $H_*^{\rho \otimes \psi}(X;R[\pi_1(X)/N]^k)$ coincides with the (untwisted) homology of $X_N$ with coefficients in $R$.
\end{remark}

\begin{lemma}
\label{lem:RelativeTwisted}
If $z \in D_n$, then the $R$-module~$H_1^\varphi(D_n,z;R^k)$ is free of rank~$nk$.
\end{lemma}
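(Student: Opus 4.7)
The plan is to reduce $(D_n,z)$ to its simplest possible CW model and then read the homology straight off the chain complex. First I would observe that $D_n$ deformation retracts onto the rose $R_n=\bigvee_{i=1}^n S^1$ with wedge point $z$, where the $i$-th petal represents the generator $x_i\in F_n=\pi_1(D_n,z)$. Since twisted homology is a homotopy invariant of the pair, it suffices to compute $H_1^\varphi(R_n,\{z\};R^k)$.

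Next I would equip $R_n$ with the obvious CW structure: the single $0$-cell $z$ together with $n$ $1$-cells, one for each petal. Then $(R_n,\{z\})$ is a relative CW complex with no $0$-cells and exactly $n$ $1$-cells, so the cellular chain complex $C_*(\widetilde{R_n},p^{-1}(z))$ of left $\mathbb{Z}[F_n]$-modules is concentrated in degree $1$, where it is freely generated over $\mathbb{Z}[F_n]$ by lifts $\widetilde{x}_1,\dots,\widetilde{x}_n$ of the petals. Consequently
\[
C_*^\varphi(D_n,z;R^k):\quad 0\longrightarrow R^k\otimes_{\mathbb{Z}[F_n]}\mathbb{Z}[F_n]^{\,n}\longrightarrow 0
\]
is the complex with a single nonzero term $R^{nk}$ sitting in degree $1$ and zero differential. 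Taking homology immediately gives $H_1^\varphi(D_n,z;R^k)\cong R^{nk}$, which is free of rank $nk$.

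I do not anticipate any real obstacle here; the statement is essentially a tautology once one chooses an economical CW structure, since the only cells contributing to the relative complex are the $n$ $1$-cells corresponding to $x_1,\dots,x_n$. The only minor point worth checking carefully is that the $\mathbb{Z}[F_n]$-module structure on $C_1(\widetilde{R_n},p^{-1}(z))$ really is free of rank $n$ with the $\widetilde{x}_i$ as a basis, which follows from standard covering-space theory applied to the rose.
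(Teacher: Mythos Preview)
Your argument is correct and follows essentially the same route as the paper: reduce $(D_n,z)$ to the rose with its obvious CW structure, observe that the relative chain complex of the universal cover is concentrated in degree~$1$ and free of rank~$n$ over $\mathbb{Z}[F_n]$ on the lifts $\tilde{x}_i$, and tensor with $R^k$ to get a complex concentrated in a single degree. The paper's proof is identical in content, though phrased slightly more tersely.
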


\begin{proof}
The punctured disk~$D_n$ is homotopy equivalent to the wedge of the~$n$ loops representing the generators of $\pi_1(D_n)$ described in Subsection (\ref{sub:braids}). Choose a cellular decomposition of this latter space $X$ consisting of the~$0$-cell~$z$ (the basepoint of the wedge) and one~$1$-cell~$x_i$ for each loop. For $i=1,2,\dots,n$, let $\tilde{x}_i$ be the lift of $x_i$ starting at an (arbitrary) fixed lift of $z$. With this cell structure, the twisted chain complex of $(X,z)$ is 
$$C_1^\varphi(X,z;R^k)=R^k \otimes_{\mathbb{Z}[\pi_1(X)]} C_1(\tilde{X},\tilde{z}) = R^k \otimes_{\mathbb{Z}[\pi_1(X)]} \bigoplus_{i=1}^n \mathbb{Z}[\pi_1(X)]\tilde{x}_i \cong \bigoplus_{i=1}^n R^k \tilde{x}_i.$$
As the chain group~$C_0(\tilde{X}, \tilde{z})$ vanishes,~$H_1^\varphi(D_n,z;R^k)=C_1^\varphi(X,z;R^k)$ and the claim follows. 
\end{proof}

 If~$\varphi$ is a representation of the fundamental group of a space~$Y$, then any map~$f:X \rightarrow Y$ induces a homomorphism
$$ H_i^{\varphi f_*}(X;R^k) \rightarrow H_i^{\varphi}(Y;R^k)$$
on the twisted homology groups, where~$f_*$ is the homomorphism induced by~$f$ on the level of the fundamental groups. 

\begin{example}
\label{ex:braidInduced}
Fix a basepoint $z \in \partial D_n$. Let~$h_\beta: D_n \rightarrow D_n$ be a homeomorphism representing a braid~$\beta \in B_n$. As~$h_\beta$ fixes the boundary of the disk, it lifts uniquely to a homeomorphism~$\tilde{h}_\beta\colon \tilde{D}_n \rightarrow \tilde{D}_n$ which preserves a fixed lift of $z$. Up to homotopy, this lift depends uniquely on the isotopy class of~$h_\beta$ and consequently the map induced on the chain group~$C_1(\tilde{D}_n,\tilde{z})$ depends uniquely on the braid~$\beta$. Therefore each colored braid $\beta$ induces a well-defined homomorphism
$$ H_1^{\beta_*\varphi}(D_n;R^k) \rightarrow H_1^\varphi(D_n;R^k)$$
on twisted homology. 

As the braid group acts by right automorphisms on the free group $\pi_1(D_n)$, the left $\mathbb{Z}[\pi_1(D_n)]$-module~$C_1(\tilde{D}_n,\tilde{z})$ inherits a right action of the braid group. As in Subsection (\ref{sub:braids}), the composition of automorphisms on $C_1(\tilde{D}_n,\tilde{z})$ will be read from \textit{left to right.}

Finally, observe that if~$H$ denotes the free abelian group on~$t_1,t_2,\dots, t_\mu$, then the epimorphism~$\psi_c: \pi_1(D_n) \rightarrow H, \ x_i \mapsto t_{c_i}$ satisfies $\beta_*\psi_c =\psi_c$ precisely when $\beta$ is a $(c,c)$-colored braid.
\end{example}

\subsection{Torsion of chain complexes}
\label{sub:torsion}
Next, we review briefly the definition of the torsion of a chain complex. Standard references include \citep{Milnor} and \citep{Turaev}.

Given two bases ${\bf c},{\bf c}'$ of a finite dimensional vector space over a field~$F$, let~$[{\bf c} / {\bf c}'] \in F \setminus \lbrace 0 \rbrace $ be the determinant of the matrix expressing the vectors of the basis ${\bf c}$ as a linear combination of vectors in ${\bf c}'$. Let 
$C= \left( 0 \rightarrow C_m \rightarrow C_{m-1}\rightarrow \dots \rightarrow C_0 \rightarrow 0 \right)$
be a chain complex of vector spaces over~$F$ such that for $i=1,2,\dots, m$ each $C_i$ has a distinguished basis ${\bf c}_i$. If $C$ is not acyclic, then we set $\tau(C)=0$. Otherwise, let ${\bf b}_i$ be a sequence of vectors in $C_i$ such that $\partial_{i-1}({\bf b}_i)$ forms a basis of $\text{Im}(\partial_{i-1})$. Clearly the sequence~$\partial_i({\bf b}_{i+1}){\bf b}_i$ is a basis of $C_i$. The \textit{torsion} of the based chain complex~$C$ is defined as
$$ \tau(C)=\prod_{i=0}^m [\partial_i({\bf b}_{i+1}){\bf b}_i/{\bf c}_i]^{(-1)^{i+1}} \in F \setminus \lbrace 0 \rbrace.$$
It turns out that $\tau(C)$ depends on the choice of basis for~$C_i$ but does not depend on the choice of ${\bf b}_i$. For the proof of the next proposition, see \cite[Theorem $0.1.1$]{TuraevArticle}
\begin{proposition}
\label{prop:multtorsion}
Let~$0 \rightarrow C' \rightarrow C \rightarrow C'' \rightarrow 0$ be a short exact sequence of finite-dimensional chain complexes over $F$. Assume that~$C'$ or $C''$ is acyclic, and that~$C_i, C_i', C_i''$ have distinguished bases~${\bf c}_i, {\bf c}_i', {\bf c}_i''$ such that~$[{\bf c}_i/{\bf c}_i'{\bf c}_i'']=1.$ Then~$\tau(C)=\pm \tau(C')\tau(C'').$
\end{proposition}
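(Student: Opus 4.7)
The strategy is the standard block-triangular computation for torsion of short exact sequences of based chain complexes. The main point is to build compatible bases of $C_i$ from chosen bases on $C_i'$ and $C_i''$, and then observe that the change-of-basis matrix used in the definition of $\tau(C)$ becomes block lower-triangular with the corresponding matrices for $\tau(C')$ and $\tau(C'')$ on the diagonal.

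First I would reduce to the case where all three chain complexes are acyclic. The long exact homology sequence of $0 \to C' \to C \to C'' \to 0$ combined with the acyclicity of $C'$ or $C''$ shows that $C$ is acyclic if and only if the remaining complex is. Thus outside the case when all three are acyclic, both sides of the claimed equality vanish (by our convention $\tau = 0$ for non-acyclic complexes), and there is nothing to prove. So I assume all three are acyclic throughout.

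Next, fix sequences ${\bf b}_i'$ in $C_i'$ so that $\partial_{i-1}'({\bf b}_i')$ is a basis of $\operatorname{Im}(\partial_{i-1}')$, and similarly ${\bf b}_i''$ in $C_i''$. Lift each ${\bf b}_i''$ to a sequence $\widetilde{\bf b}_i''$ in $C_i$ using a set-theoretic section of the surjection $C_i \to C_i''$; concatenate to obtain a sequence ${\bf b}_i := ({\bf b}_i', \widetilde{\bf b}_i'')$ in $C_i$. The key verification is that $\partial_i({\bf b}_{i+1}){\bf b}_i$ is a basis of $C_i$: the piece $\partial_i({\bf b}_{i+1}')$ equals $\partial_i'({\bf b}_{i+1}')$, and $\partial_i(\widetilde{\bf b}_{i+1}'')$ projects onto $\partial_i''({\bf b}_{i+1}'')$, so modulo $C_i'$ the sequence $\partial_i(\widetilde{\bf b}_{i+1}'')\widetilde{\bf b}_i''$ descends to the basis $\partial_i''({\bf b}_{i+1}''){\bf b}_i''$ of $C_i''$, while the $C_i'$-entries are covered by $\partial_i'({\bf b}_{i+1}'){\bf b}_i'$.

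With these choices, consider the matrix expressing $\partial_i({\bf b}_{i+1}){\bf b}_i$ in the basis ${\bf c}_i$. Using the hypothesis $[{\bf c}_i/{\bf c}_i'{\bf c}_i'']=1$, I may equivalently compute in the basis $({\bf c}_i', \widetilde{\bf c}_i'')$, where $\widetilde{\bf c}_i''$ lifts ${\bf c}_i''$. In this basis the matrix is block lower-triangular: the upper-left block has determinant $[\partial_i'({\bf b}_{i+1}'){\bf b}_i'/{\bf c}_i']$, the lower-right block has determinant $[\partial_i''({\bf b}_{i+1}''){\bf b}_i''/{\bf c}_i'']$, and the off-diagonal entries (coming from the fact that $\partial \widetilde{\bf b}_{i+1}''$ need not be contained in the lifted copy of $C_i''$, and that $\widetilde{\bf c}_i''$ need not equal the lifted sub-basis one picks to compute) do not affect the determinant. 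Taking the alternating product over $i$ yields the equality up to sign.

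The step I expect to require the most care is bookkeeping the sign. It arises from two sources: reordering the concatenated basis $\partial({\bf b}_{i+1}){\bf b}_i = (\partial({\bf b}_{i+1}'),\partial(\widetilde{\bf b}_{i+1}''),{\bf b}_i',\widetilde{\bf b}_i'')$ into the block form $(\partial({\bf b}_{i+1}'),{\bf b}_i',\partial(\widetilde{\bf b}_{i+1}''),\widetilde{\bf b}_i'')$, and the permutation needed to match $({\bf c}_i',\widetilde{\bf c}_i'')$ with ${\bf c}_i$. Both contribute signs that depend only on the ranks of the $C_i', C_i''$, and one must verify that after taking the alternating product these signs collapse to $\pm 1$ uniformly. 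Since the statement only asserts equality up to sign, tracking the precise sign is not needed; I would simply observe that each contribution is $\pm 1$ and conclude $\tau(C) = \pm\tau(C')\tau(C'')$.
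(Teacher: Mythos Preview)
Your argument is correct and is precisely the standard block-triangular proof of multiplicativity of torsion (as in Milnor or Turaev). Note, however, that the paper does not supply its own proof of this proposition: it simply refers the reader to \cite[Theorem~0.1.1]{TuraevArticle}, so there is nothing to compare against beyond observing that your argument is the one found in that reference.
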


\subsection{Twisted torsion of links}
\label{sub:twistedtorsion}
Following closely Cha-Friedl \citep{Friedl-Cha}, we describe the twisted torsion of $CW$ complexes and links. Finally, we review Fox calculus and how it allows explicit computations of the twisted torsion (\citep{Kitano,Wada}).

Let~$X$ be a finite CW-complex, let~$\rho\colon \pi_1(X) \rightarrow GL_k(R)$ be a representation and let~$\psi:  \pi_1(X) \rightarrow H$ be an epimorphism onto a free abelian group $H$. If $R$ comes with an involution, we endow $R[H]$ with the involution $\overline{rh}=\overline{r}h^{-1}$ for $r \in R$ and $h \in H.$ This extends to an involution on $Q(H),$ the quotient field of $R[H].$
The homomorphisms $\rho$ and $\psi$ induce an action of $\pi_1(X)$ on $R^k \otimes_R R[H]$ by setting
$$ (u \otimes v) \cdot \gamma=u \rho(\gamma) \otimes v \psi(\gamma), $$
where $\gamma \in \pi_1(X)$, $u \in R^k$ and $v \in R[H]$. If  $R^k \otimes_R R[H]$ is identified with $R[H]^k$ and $\rho \otimes \psi\colon \pi_1(X) \rightarrow GL_k(R[H])$ denotes the resulting representation, then one can form the twisted homology groups $H_*^{\rho \otimes \psi}(X;R[H]^k)$. Moreover, as the representation $\rho \otimes \psi $ induces a representation $\rho \otimes \psi: \pi_1(X) \rightarrow GL_k(Q(H))$, one may also consider the $Q(H)$-vector spaces $H_*^{\rho \otimes \psi}(X;Q(H)^k).$

Choose a lift~$\tilde{x}_i^q$ of each~$q$-cell~$x_i^q$ of~$X$ to the universal cover~$\tilde{X}$ and denote by~$e_1,e_2,\dots, e_k$ the canonical basis of~$Q(H)^k.$ This yields a basis~$ \lbrace \tilde{x}_i^q  \otimes e_j \rbrace$ over $Q(H)$ for~$C_q^{\rho \otimes \psi}(X;Q(H)^k).$
 
\begin{definition}
If the chain complex~$C_*^{\rho \otimes \psi}(X;Q(H)^k)$ is acyclic, then the \textit{twisted torsion}~
$$\tau^{\rho \otimes \psi}(X) \in Q(H)\setminus \lbrace 0 \rbrace~$$ of~$X$ is the torsion of the chain complex~$C_*^{\rho \otimes \psi}(X;Q(H)^k)$. If~$C_*^{\rho \otimes \psi}(X;Q(H)^k)$ is not acyclic, then we set $\tau^{\rho \otimes \psi}(X)=0.$
\end{definition}

It is known (\citep{Milnor, Turaev, Kitano, KL, FKK, Friedl-Cha}) that the twisted torsion~$\tau^{\rho\otimes \psi}(X)$ is well-defined up to multiplication by an element in~$\pm \det(\rho \otimes \psi(\pi_1(X)))$ and is invariant under simple homotopy. Since $\det(\rho \otimes \psi(\pi_1(X)))$ is contained in $\det(\rho(\pi_1(X))) \cdot H$, one often considers~$\tau^{\rho\otimes \psi}(X)$ up to multiplication by $\pm d h$ for $d \in \det(\rho(\pi_1(X))) $ and $h \in H$. By Chapman's theorem \citep{Chapman},~$\tau^{\rho \otimes \psi}(X)$ only depends on the homeomorphism type of~$X$. In particular, when~$M$ is a manifold, one can define~$\tau^{\rho\otimes \psi}(M)$ by picking any~$CW$-structure for~$M$. 

Recall (\citep{C,CF,CC}) that a \textit{colored link} $L$ consists of an oriented link $L=L_1 \cup L_2 \cup \dots \cup L_\mu$ together with a surjective map $c$  assigning to each component a color in $ \lbrace 1, 2, \dots ,\mu \rbrace $. The sublink $L_i$ is constituted by the components of $L$ with color $i$, for $i=1,2,\dots,\mu$. If~$H$ is the free abelian group (written multiplicatively) on~$t_1,t_2,\dots, t_\mu$ and $X_L$ is the exterior of $L$, let~$\psi_c$ be the epimorphism  $ \pi_1(X_L)\rightarrow H, \gamma \mapsto t_1^{lk(\gamma,L_1)}\cdots t_\mu^{lk(\gamma,L_\mu)}$, where $lk$ denotes the linking number. The \textit{twisted torsion~$\tau^{\rho \otimes \psi_c}(L)$ of the colored link~$L$} is then the twisted torsion of the exterior of $L$.  When the context is clear, we shall drop the $\psi$'s in the notation of twisted homology and twisted torsion. Variations of the following lemma are well-known (\citep{Turaev, KL, Friedl-Kim, Friedl-Cha}),  but we give a proof for completeness.

\begin{lemma}
\label{lem:nonzero}
If the $R[H]$-module~$H_1^{\rho}(X_L;R[H]^k)$ is torsion, then $C_*^{\rho}(X_L;Q(H)^k)$ is acyclic.
\end{lemma}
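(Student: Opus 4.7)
The plan is to show directly that $H_i^{\rho}(X_L;Q(H)^k)$ vanishes in every degree, which is what acyclicity means. Three facts drive the argument: flatness of the inclusion $R[H]\hookrightarrow Q(H)$, the existence of a $2$-dimensional spine of $X_L$, and $\chi(X_L)=0$.

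First, since $Q(H)$ is the localization of the commutative domain $R[H]$ at the nonzero elements, it is flat over $R[H]$. Tensoring the chain complex $C_*^{\rho}(X_L;R[H]^k)$ with $Q(H)$ over $R[H]$ yields $C_*^{\rho}(X_L;Q(H)^k)$, and flatness allows homology to commute with this base change:
\[
H_i^{\rho}(X_L;Q(H)^k)\;\cong\; Q(H)\otimes_{R[H]} H_i^{\rho}(X_L;R[H]^k).
\]
The hypothesis that $H_1^{\rho}(X_L;R[H]^k)$ is $R[H]$-torsion therefore immediately gives $H_1^{\rho}(X_L;Q(H)^k)=0$, since every element is killed after inverting some nonzero $R[H]$-element.

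Next, because $X_L$ is a compact $3$-manifold with nonempty boundary, it is well known to collapse onto a finite $2$-dimensional CW complex; I would compute the twisted chain complex using such a spine, which makes $C_q^{\rho}(X_L;Q(H)^k)=0$ for $q\geq 3$ and gives $H_i^{\rho}(X_L;Q(H)^k)=0$ for $i\geq 3$ for free. With this model fixed, only $H_0$, $H_1$, $H_2$ can be nonzero, and the $Q(H)$-dimension of $C_q^{\rho}(X_L;Q(H)^k)$ is exactly $k$ times the number of $q$-cells of the spine.

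Finally, $X_L$ has toroidal boundary, so $\chi(X_L)=\tfrac12\chi(\partial X_L)=0$. Summing alternating dimensions of the chain complex gives
\[
\sum_{i=0}^{2}(-1)^i\dim_{Q(H)}H_i^{\rho}(X_L;Q(H)^k)\;=\; k\cdot\chi(X_L)\;=\;0.
\]
Combined with $\dim_{Q(H)}H_1^{\rho}(X_L;Q(H)^k)=0$ this yields $\dim_{Q(H)}H_0^{\rho}+\dim_{Q(H)}H_2^{\rho}=0$, and since both dimensions are nonnegative they must both vanish. Hence $C_*^{\rho}(X_L;Q(H)^k)$ is acyclic.

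I do not anticipate a significant obstacle; the only point requiring care is the flatness step, where one must actually be working with a commutative integral domain so that $Q(H)$ makes sense (which is guaranteed by the standing assumption that $R$ is an integral domain from Subsection~\ref{sub:homology}). The Euler characteristic bookkeeping and the $2$-dimensional spine of a bounded $3$-manifold are classical and require no extra input.
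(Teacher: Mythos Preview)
Your proof is correct and follows essentially the same strategy as the paper: reduce to a $2$-complex and use an Euler-characteristic count over $Q(H)$. The one difference is in how $H_0$ is handled. The paper first argues separately that $H_0^{\rho}(X_L;R[H]^k)$ is $R[H]$-torsion (identifying it via Remark~\ref{rem:shapiro} with $H_0^\rho(X_L^\psi;R^k)$ and using connectedness of the cover), and only then invokes the Euler characteristic to conclude that $H_2$ is torsion. You instead use the Euler-characteristic identity $\dim H_0 - \dim H_1 + \dim H_2 = 0$ together with $\dim H_1 = 0$ to kill $H_0$ and $H_2$ simultaneously, which is a bit more economical and avoids the covering-space detour. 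Both arguments are standard; yours is slightly shorter, while the paper's yields the marginally stronger intermediate statement that $H_0$ is torsion without appealing to $\chi(X_L)=0$.
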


\begin{proof}
As the link exterior~$X_L$ is homotopy equivalent to a~$2$-complex,~$H_3^{\rho}(X_L;R[H]^k)$ vanishes. If $X_L^\psi$ denotes the covering of $X_L$ corresponding to the kernel of $\psi$, then Remark \ref{rem:shapiro} implies that~$H_0^{\rho}(X_L;R[H]^k)$ is isomorphic to $H_0^\rho(X_L^\psi;R^k)$. Since $\psi$ is surjective, $X_L^\psi$ is connected and consequently~$H_0^{\rho}(X_L;R[H]^k)$ is $R[H]$-torsion. An Euler characteristic argument then shows that~$H_2^{\rho}(X_L;R[H]^k)$ is torsion over~$R[H]$. As all the twisted homology groups of~$X_L$ are torsion over~$R[H]$, the chain complex~$C^{\rho}_*(X_L;Q(H)^k)$ is acyclic and the claim follows.
\end{proof}

Next, following Wada \citep{Wada} and Kitano \citep{Kitano}, we shall recall how $\tau^{\rho}(X)$ can be computed via Fox calculus. Given a free group~$F$, the \textit{Fox derivative} (first introduced by Fox \citep{Fox})
$$\frac{\partial}{\partial x_j}: \mathbb{Z}[F] \rightarrow \mathbb{Z}[F]~$$
 is the linear extension of the map defined on elements of~$F$ by 
$$
\frac{\partial x_i}{\partial x_j}=\delta_{ij}, \ \ \ \ \ \ \ \ \ \frac{\partial x_i^{-1}}{\partial x_j}=-\delta_{ij} x_i^{-1}, \ \ \ \ \ \ \ \ \  \frac{\partial (uv)}{\partial x_j}=\frac{\partial u}{\partial x_j}+u\frac{\partial v}{\partial x_j}.
$$
Choose a cellular decomposition of the CW complex~$X$ with one~$0$-cell~$v$,~$n$ oriented~$1$-cells labeled~$x_1, x_2,\dots,x_n$ having all their endpoints identified with~$v$ to form~$n$ loops, and~$m$ oriented~$2$-cells~$c_1, c_2,\dots, c_{m}$ with each~$\partial c_i$ glued to the~$1$-cells according to a word~$r_i$. The fundamental group of $X$ then admits a presentation with generators $x_1, x_2, \dots, x_n$ and relators $r_1,r_2, \dots, r_m.$ Let~$\tilde{v}, \tilde{x}_i$ and~$\tilde{c}_i$ be corresponding lifts to the universal cover~$p: \tilde{X} \rightarrow X$.

Let~$F_n$ be the free group on~$x_1,x_2,\dots,x_n$ and let $pr: \mathbb{Z}[F_n] \rightarrow \mathbb{Z}[\pi_1(X)]$ denote the ring homomorphism induced by the quotient map. The chain group~$C_1(\tilde{X},p^{-1}(v))$ is generated by the~$\tilde{x}_i$, and if~$w$ is a word in the~$x_i$, then its lift~$\tilde{w}$ (viewed as a~$1$-chain in the universal cover) can be written as
$$ \tilde{w}=\sum_{j=1}^n pr \left( \frac{\partial w}{\partial
x_j} \right) \tilde{x}_j.$$
Since the boundary map~$\partial_2$ of the chain complex~$C_*(\tilde{X})$ sends~$\tilde{c}_i$ to the lift of~$r_i$ beginning at~$\tilde{v}$, the previous equation specializes to
$$ \partial_2(\tilde{c}_i)=\sum_{j=1}^n  pr \left( \frac{\partial r_i}{\partial x_j} \right) \tilde{x}_j.$$
Following a common convention (\citep{Turaev}, \citep{KL}), we shall assume that the elements in the chain complex $C_*(\tilde{X})$ of free left $\mathbb{Z}[\pi_1(X)]$-modules are row vectors and that the matrices of the differentials act by right multiplication. Consequently, $\partial_2$ is represented by the $((n-1)\times n)$ matrix whose $(i,j)$-coefficient is 
$ pr \left( \frac{\partial r_i}{\partial
x_j} \right). $

Slightly abusing notation, we shall also denote by~$\rho \otimes \psi$ the composition of the map~$ pr: \mathbb{Z}[F_n] \rightarrow \mathbb{Z}[\pi_1(X)]$ with the map~$ \rho \otimes \psi: \mathbb{Z}[\pi_1(X)] \rightarrow M_k(R[H])$. The boundary map~$\id \otimes \partial_2$ in the twisted chain complex~$C_*^\rho(X,v;R[H]^k)$ is then represented by the $((n-1) \times n)$ matrix~$A$ whose~$(i,j)$ coefficient is 
$$ \rho \otimes \psi \left(\frac{\partial r_i}{\partial x_j}\right) \in M_k(R[H]).$$
Assume (for the sake of exposition) that $\pi_1(X)$ admits a deficiency one presentation with $n$ generators. For~$j=1,2,\dots,n,$ regard the matrix~$A_j$, obtained by removing the~$j$-th column of~$A$, as a~$((n-1)k \times (n-1)k)$ matrix with coefficients in~$R[H]$.  The \textit{Wada invariant} of~$X$ is 
$$W(X)=\frac{\det(A_j)}{\det\left( (\rho \otimes \psi)(x_j-1) \right)}.$$
Wada \citep{Wada} proved that~$\det\left( (\rho \otimes \psi)(x_j-1) \right)$ is non-zero for~$j=1,2,\dots,n$ and that~$W$ is independent of the choice of the index~$j$. Kitano \citep{Kitano} showed that Wada's invariant coincides with the twisted torsion of~$X$:
$$W(X)= \pm d h \ \tau^\rho(X)$$
for some $d \in \det(\rho(\pi_1(X)))$ and $h \in H$. In practice, we shall use Wada's invariant in order to compute the twisted torsion. Note that if $\pi_1(X)$ admits the relation $r=s$, then Fox calculus yields
$$ \frac{\partial (rs^{-1})}{\partial x_i}=\frac{\partial r}{\partial x_i}-rs^{-1}\frac{\partial s}{\partial x_i},$$
and since $\rho \otimes \psi(rs^{-1})=I_k$, one obtains
$$ \rho \otimes \psi \left( \frac{\partial (rs^{-1})}{\partial x_i} \right)=\rho \otimes \psi \left(\frac{\partial r}{\partial x_i} \right)-\rho \otimes \psi  \left( \frac{\partial s}{\partial x_i} \right)=\rho \otimes \psi \left(\frac{\partial (r-s)}{\partial x_i} \right). $$
Consequently the Fox derivatives of the relator $rs^{-1}$ can be computed by considering the element $r-s$ of the group ring $\mathbb{Z}[F_n].$

\begin{example}
\label{ex:trefoil}
Let~$T$ be the trefoil knot. The group~$\pi_1(S^3 \setminus T)$ admits a presentation with two generators~$ x_1, x_2~$ and a unique relation~$x_1x_2x_1=x_2x_1x_2$. If $r$ denotes  $x_1x_2x_1-x_2x_1x_2$, then Fox calculus shows that
$$ \frac{\partial r}{\partial x_1}=1-x_2+x_1x_2 $$
and
$$ \frac{\partial r}{\partial x_2}=-1-x_1+x_2x_1.$$
Let~$\rho\colon \pi_1(S^3 \setminus T) \rightarrow GL_2(\mathbb{Z}[s^{\pm 1}])$ be the representation given by 
\[
 \rho(x_1)=
\begin{pmatrix}
-s & 1 \\
0 & 1
\end{pmatrix}
, \ \
 \rho(x_2)=
\begin{pmatrix}
1 & 0 \\
s & -s
\end{pmatrix}.
\]
If~$\psi: \pi_1(S^3 \setminus T) \rightarrow \mathbb{Z}=\langle t \rangle$ is the abelianization homomorphism sending~$x_i$ to~$t$ for~$i=1,2$, then a short computation shows that
$$ \det \left(\rho \otimes \psi \left( \frac{\partial r}{\partial x_1}\right) \right)=\det
\begin{pmatrix}
1-t& -st^2 \\
-st+st^2 & 1+st-st^2 
\end{pmatrix}
=(1-t)(1+st)(1-st^2)$$
and 
$$ \det((\rho \otimes \psi) (1-x_1))=\det 
\begin{pmatrix}
1+st & -t \\
0 & 1-t 
\end{pmatrix}
= (1-t)(1+st).
$$
Therefore the twisted torsion of $T$ is~$\tau^{\rho}(T)=1-st^2$ up to the indeterminacy $\pm d h $ (with $d \in \det(\rho(\pi_1(S^3 \setminus T)))$ and $h \in H$) which in this case is $ \pm s^m t^n$, where $m,n \in \mathbb{Z}$.
\end{example}

Note that if~$\rho$ is the trivial one-dimensional representation, $\mu=1$, and~$\psi: \pi_1(X_L) \rightarrow \mathbb{Z}=\langle t \rangle$ is the homomorphism sending each meridian of a link~$L$ to~$t$, then~$(t-1)\tau^{\rho}(L)$ is equal to the Alexander polynomial~$\Delta_L(t)$ of~$L$. On the other hand, if $L$ has $\mu=n \geq 2$ components and~$\psi$ is the abelianization homomorphism, then the twisted torsion is equal to the multivariable Alexander polynomial~$\Delta_L(t_1,\dots,t_n)$ of~$L$.

\section{The twisted Burau map and the twisted Alexander polynomial}
\label{sec:results}

\subsection{The twisted Burau map}
\label{sub:Burau}
In this subsection we define the twisted Burau map and show how to compute it using Fox calculus.

Fix a sequence~$c=(c_1,c_2,\dots,c_n)$ of elements in~$\lbrace 1,2,\dots, \mu \rbrace$ and a representation~$\rho\colon \pi_1(D_n) \rightarrow GL_k(R)$. If~$H$ denotes the free abelian group (written multiplicatively) on~$t_1,t_2,\dots, t_\mu$, then we let~$\psi_c: \pi_1(D_n) \rightarrow H$ be the epimorphism defined by~$x_i \mapsto t_{c_i}$. Given a basepoint~$z \in \partial D_n$ of the punctured disk~$D_n$, we saw in Example \ref{ex:braidInduced} that each colored braid~$\beta \in B_c$ induces a well-defined homomorphism
$$\mathcal{B}_\rho(\beta)\colon H_1^{\beta_*\rho}(D_n,z;R[H]^k) \rightarrow H_1^{\rho}(D_n,z;R[H]^k)$$
on twisted homology.
Using the same notations as in the proof of Lemma \ref{lem:RelativeTwisted}, we shall call the basis resulting from the isomorphism
$$H_1^{\rho}(D_n,z;R[H]^k) \cong \bigoplus_{i=1}^n R[H]^k \tilde{x}_i$$
the \textit{good basis} of~$H_1^{\rho}(D_n,z;R[H]^k).$ With respect to the good bases of~$H_1^{\beta_*\rho}(D_n,z;R[H]^k)~$ and~$H_1^{\rho}(D_n,z;R[H]^k)$, the homomorphism~$\mathcal{B}_\rho(\beta)$ gives rise to a~$kn \times kn$ matrix with coefficients in~$R[H].$

\begin{definition}
The \textit{twisted Burau map} 
$$ \mathcal{B}_\rho\colon B_c \rightarrow GL_{nk}(R[H])$$
sends a colored braid~$\beta$ to the matrix~$\mathcal{B}_\rho(\beta) \in GL_{nk}(R[H])$ defined above. 
\end{definition}

The next lemma shows that while the twisted Burau map is generally not a representation, it is nevertheless determined by the generators of $B_c$ (compare with \citep[Equation~$(12)$]{Boden}).

\begin{lemma}
\label{lem:cocycle}
If~$\beta, \gamma \in B_c$ are two $\mu$-colored braids, then the equation
$$\mathcal{B}_{\rho}(\beta \gamma )=\mathcal{B}_{\gamma_*\rho}(\beta)\mathcal{B}_{\rho}(\gamma )$$
holds for each representation~$\rho$ of the free group~$\pi_1(D_n)$. 
\end{lemma}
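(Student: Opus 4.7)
The plan is to reduce the statement to the chain-level version of the identity. Recall from Example~\ref{ex:braidInduced} that each braid $\beta$ lifts to a homeomorphism $\tilde h_\beta\colon \tilde D_n \to \tilde D_n$ preserving a chosen lift $\tilde z$ of the basepoint $z$, and the paper shows there that the assignment $\beta \mapsto (\tilde h_\beta)_*$ is a right action of $B_n$ on the chain group $C_1(\tilde D_n, \tilde z)$ — so that, in the paper's left-to-right convention,
\[
(\tilde h_{\beta\gamma})_* \;=\; (\tilde h_\beta)_*\,(\tilde h_\gamma)_*.
\]
Moreover, $\mathcal{B}_\rho(\beta)$ is the matrix, in the good bases, of the map induced on $H_1$ by
\[
1\otimes(\tilde h_\beta)_*\colon C_1^{\beta_*\rho}(D_n,z;R[H]^k)\longrightarrow C_1^{\rho}(D_n,z;R[H]^k).
\]

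Tensoring the previous identity with the identity on $R[H]^k$ yields
\[
1\otimes(\tilde h_{\beta\gamma})_* \;=\; \bigl[1\otimes(\tilde h_\beta)_*\bigr]\bigl[1\otimes(\tilde h_\gamma)_*\bigr]
\]
of chain maps, still in left-to-right order. For the composition to be well-defined on the twisted chain complexes, the right factor $1\otimes(\tilde h_\gamma)_*$ must go from twisting $\gamma_*\rho$ to $\rho$ (so it represents $\mathcal{B}_\rho(\gamma)$), while the left factor $1\otimes(\tilde h_\beta)_*$ must go from $\beta_*(\gamma_*\rho)$ to $\gamma_*\rho$ (so it represents $\mathcal{B}_{\gamma_*\rho}(\beta)$). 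By the paper's identity $\beta_*\gamma_*\rho=(\beta\gamma)_*\rho$, the source of the composition is $(\beta\gamma)_*\rho$, matching the source of $\mathcal{B}_\rho(\beta\gamma)$.

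Passing to $H_1$ and reading off the matrices in the good bases — which are the same on source and target, since they depend only on the lifts $\tilde x_i$ of the generating cells and not on the twisting — together with the convention that matrices act on row vectors by right multiplication and hence multiply left-to-right, produces
\[
\mathcal{B}_\rho(\beta\gamma)\;=\;\mathcal{B}_{\gamma_*\rho}(\beta)\,\mathcal{B}_\rho(\gamma),
\]
as claimed. The only real obstacle is notational bookkeeping: once the left-to-right conventions for composing lifts and multiplying matrices are in place, and once the source/target twistings are tracked through the equality $\beta_*\gamma_*\rho=(\beta\gamma)_*\rho$, the lemma reduces to the functoriality of $(\tilde h_\beta)_*$ on the cellular chain complex.
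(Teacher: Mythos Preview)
Your proof is correct and follows essentially the same approach as the paper's own proof. The paper's argument is more terse---it simply observes that the composition $\mathcal{B}_{\gamma_*\rho}(\beta)$ followed by $\mathcal{B}_{\rho}(\gamma)$ on twisted homology coincides with the map induced by $\beta\gamma$, invoking the right-action convention from Example~\ref{ex:braidInduced}---whereas you spell out the chain-level details (tensoring, tracking source and target twistings, matching bases) more explicitly, but the underlying idea is identical.
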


\begin{proof}
The composition 
$$  H_1^{\beta_*\gamma_*\rho}(D_{n},z;R[H]^k)  \stackrel{\mathcal{B}_{\gamma_*\rho}(\beta)}{\longrightarrow} H_1^{\gamma_*\rho}(D_{n},z;R[H]^k) \stackrel{\mathcal{B}_{\rho}(\gamma )}{\longrightarrow} H_1^{\rho}(D_n,z;R[H]^k)$$
of the maps induced by~$\beta$ and~$\gamma $
coincides with the map induced by~$\beta \gamma $. As the braid group acts on the twisted homology from the right, and composition is read from left to right (see Example \ref{ex:braidInduced}), the claim follows immediately.
\end{proof}
 
\begin{example}
If $R=\mathbb{Z}$ and $\rho$ is the trivial one-dimensional representation, then one gets a homomorphism $\mathcal{B}_\rho\colon B_c \rightarrow GL_n(\mathbb{Z}[t_1^{\pm 1},\dots,t_\mu^{\pm 1}])$ which coincides with the colored Gassner representation. In particular if $c=(1,1,\dots,1)$, then $\mathcal{B}_\rho$ is the classical Burau representation, while if $c=(1,2,\dots,n)$, then $\mathcal{B}_\rho$ is the Gassner representation (see also Examples \ref{ex:sigma1unred} and \ref{ex:gassner}).
\end{example} 
 
\begin{proposition}
\label{prop:BurauFox}
Let $\beta \in B_c$ be a colored braid. Consider the~$(n \times n)$-matrix~$A$ whose~$(i,j)$  component is 
~$$(\rho \otimes \psi_c)  \left(\frac{\partial  (x_i \beta)  }{\partial x_j} \right) \in M_k(R[H]).~$$
If one views~$A$ as a $(nk \times nk)$-matrix with coefficients in~$R[H]$, then $\mathcal{B}_\rho(\beta)$ is equal to $A$.
\end{proposition}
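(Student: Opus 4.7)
The plan is to compute $\mathcal{B}_\rho(\beta)$ directly at the chain level, using the minimal CW structure introduced in the proof of Lemma~\ref{lem:RelativeTwisted}: the punctured disk $D_n$ deformation retracts onto a wedge $X$ made of one $0$-cell $z$ and $n$ oriented $1$-cells labeled $x_1,\dots,x_n$. Fix corresponding lifts $\tilde{z}$ and $\tilde{x}_i$ (with $\tilde{x}_i$ starting at $\tilde{z}$) in the universal cover. Because there are no $2$-cells, $H_1^{\rho}(D_n,z;R[H]^k)$ coincides with the chain group $\bigoplus_{i=1}^n R[H]^k \tilde{x}_i$, and the good basis is simply $\{e_s \otimes \tilde{x}_i\}_{s,i}$. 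Hence it suffices to compute the chain map induced by $\beta$ in this basis.

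Since $h_\beta$ fixes $\partial D_n$ and $z \in \partial D_n$, there is a unique lift $\tilde{h}_\beta$ of $h_\beta$ to $\tilde{D}_n$ fixing $\tilde{z}$. Its action on $C_1(\tilde{D}_n,\tilde{z})$ sends $\tilde{x}_i$ to the lift of $h_\beta(x_i)$ starting at $\tilde{z}$; under the identification $\pi_1(D_n,z)=F_n$, this lift is precisely $\widetilde{x_i\beta}$, where $x_i\beta \in F_n$ denotes the word obtained from the right action of $\beta$. Applying the Fox calculus expansion recalled in Subsection~\ref{sub:twistedtorsion} (with the map $pr : \mathbb{Z}[F_n] \to \mathbb{Z}[\pi_1(D_n)]$ equal to the identity), one obtains
\[
\tilde{h}_{\beta*}(\tilde{x}_i) \; = \; \widetilde{x_i\beta} \; = \; \sum_{j=1}^n \frac{\partial (x_i\beta)}{\partial x_j}\,\tilde{x}_j \quad \text{in } \ C_1(\tilde{D}_n,\tilde{z}).
\]
Descending to the twisted chain complex $R[H]^k \otimes_{\mathbb{Z}[\pi_1(D_n)]} C_1(\tilde{D}_n,\tilde{z})$ and pushing the group-ring scalars through the tensor product using the right action $\rho \otimes \psi_c$, this becomes
\[
e_s \otimes \tilde{x}_i \; \longmapsto \; \sum_{j=1}^n e_s \cdot (\rho \otimes \psi_c)\!\left(\frac{\partial (x_i\beta)}{\partial x_j}\right) \otimes \tilde{x}_j,
\]
so that the $(i,j)$ block of the matrix of $\mathcal{B}_\rho(\beta)$ in the good basis is exactly $(\rho\otimes\psi_c)(\partial(x_i\beta)/\partial x_j) = A_{ij}$, as desired.

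The main point requiring care is the bookkeeping of conventions: the right action of $B_c$ on $F_n$ and on chain groups, the rule that composition of automorphisms is read from left to right, the fact that $\pi_1(D_n)=F_n$ so $pr$ is the identity, and the compatibility between the twisted coefficients $\beta_*\rho$ on the source side and $\rho$ on the target side once one tensors over $\mathbb{Z}[F_n]$ using the induced representation (where $\beta_*\psi_c=\psi_c$ since $\beta \in B_c$). Once these conventions are in place, the proposition reduces to the Fox calculus expansion of $\widetilde{x_i\beta}$ in the basis $\{\tilde{x}_j\}$ and a straightforward pushforward of scalars across the tensor product.
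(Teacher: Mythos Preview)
Your proof is correct and follows essentially the same approach as the paper: identify $H_1^\rho(D_n,z;R[H]^k)$ with the chain group via the wedge model, lift $h_\beta$ uniquely to $\tilde h_\beta$ fixing $\tilde z$, apply the Fox-calculus expansion $\widetilde{x_i\beta}=\sum_j \frac{\partial(x_i\beta)}{\partial x_j}\tilde x_j$ in $C_1(\tilde D_n,\tilde z)$, and then tensor with $R[H]^k$ using the right $\mathbb{Z}[\pi_1(D_n)]$-action $\rho\otimes\psi_c$. The paper's argument is slightly terser but contains exactly these steps.
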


\begin{proof}
Fix a lift of $z$ to the universal cover. Given a homeomorphism~$h_\beta$ representing a braid~$\beta$, let~$\tilde{h}_\beta$ be the map induced by the lift of~$h_\beta$ on the chain group $C_1(\tilde{D}_n,\tilde{z})$. As
$H_1^{\rho}(D_n,z;R[H]^k)=R[H]^k \otimes_{\mathbb{Z}[\pi_1(D_n)]} C_1(\tilde{D}_n,\tilde{z})$, the twisted Burau map is given by the homomorphism $\mathcal{B}_\rho(\beta)=\id \otimes \tilde{h}_{\beta}.$ Clearly~$ \tilde{x}_i \tilde{h}_{\beta }$ is the lift of a loop representing~$x_i \beta$ to the universal cover.  Fox calculus then shows that on the chain group level
\begin{equation*}
\tilde{x}_i \tilde{h}_{\beta}   =\sum_{j=1}^n \frac{\partial (x_i \beta)}{\partial x_j} \tilde{x}_j.
\end{equation*}
As we view elements of the left $\mathbb{Z}[\pi_1(D_n)]$-module $C_1(\tilde{D}_n,\tilde{z})$ as row vectors, $\tilde{h}_\beta$ is represented by the $(n \times n)$ matrix whose $(i,j)$ component is $\frac{\partial (x_i \beta)}{\partial x_j}$. The claim now follows from the right $\mathbb{Z}[\pi_1(D_n)]$-module structure of $R[H]^k$.
\end{proof}

The definitions and propositions of this subsection can easily be adapted to include $(c,c')$-braids. Namely, to each $(c,c')$-braid, one may associate a $(kn \times kn)$-matrix which coincides with the $(n \times n)$-matrix whose $(i,j)$ coefficient is ~$(\rho \otimes \psi_{c'})  \left(\frac{\partial  (x_i \beta)  }{\partial x_j} \right) \in M_k(R[H]).$

\begin{example}
\label{ex:sigma1unred}
 A short computation involving Fox calculus shows that
$$
\frac{\partial (x_i \sigma_i ) }{\partial x_i}=\frac{\partial(x_i x_{i+1} x_i^{-1})}{\partial x_i}= 1-x_i x_{i+1} x_i^{-1}, 
$$
and 
$$
\frac{\partial (x_i \sigma_i  )}{\partial x_{i+1}}=\frac{\partial(x_i x_{i+1} x_i^{-1}) }{\partial x_{i+1}}= x_i.
$$
Consequently, with respect to the good bases, the twisted Burau map of~$\sigma_i$ (viewed as a $(c,c')$-colored braid) is given by
\[
 \mathcal{B}_\rho(\sigma_i)
= I_{(i-1)k} \oplus
\begin{pmatrix}
I_k-\rho(x_i x_{i+1} x_i^{-1})t_{c'_{i+1}} & \rho(x_i)t_{c'_{i}} \\
I_k & 0
\end{pmatrix}
 \oplus I_{(n-i-1)k}.
\]
If~$\rho$ is the trivial one-dimensional representation,~$\mu=1$ and $R=\mathbb{Z}$, then one recovers the (unreduced) Burau representation~$\mathcal{B}_t$. 
 \end{example}
 
\begin{example}
\label{ex:gassner}
Write $c=(1,2), \ c'=(2,1)$ and decompose the pure braid $\sigma_1^2 \in P_2=B_c$ as  $\sigma_1 \sigma_1'$, where $\sigma_1$ is viewed as a $(c,c')$ braid and  $\sigma_1'$ is the braid $\sigma_1$ viewed as a $(c',c)$-braid. Assume that $R=\mathbb{Z}$ and that $\rho$ is the trivial one dimensional representation. In this case, Lemma \ref{lem:cocycle} yields
$$
\mathcal{B}_\rho(\sigma_1^2)=\mathcal{B}_\rho(\sigma_1)\mathcal{B}_\rho(\sigma_1')=
\begin{pmatrix}
1-t_1 & t_2 \\
1 & 0 
\end{pmatrix}
\begin{pmatrix}
1-t_2 & t_1 \\
1 & 0 
\end{pmatrix}
=\begin{pmatrix}
1-t_1+t_1t_2 & t_1(1-t_1) \\
1-t_2& t_1 
\end{pmatrix},
$$
which is the Gassner matrix of the pure braid $\sigma_1^2.$ On the other hand, the action of $\sigma_1^2$ on the free group $F_2$ is given by
\begin{align*}
&x_1 \sigma_1^2=(x_1x_2x_1^{-1})\sigma_1=(x_1x_2x_1^{-1})x_1(x_1x_2^{-1}x_1^{-1})=x_1x_2x_1x_2^{-1}x_1^{-1},  \\
&x_2 \sigma_1^2=x_1 \sigma_1=x_1x_2x_1^{-1},
\end{align*} 
and Fox calculus yields
\begin{align*}
&\frac{\partial (x_1 \sigma_1^2 ) }{\partial x_1}=\frac{\partial(x_1x_2x_1x_2^{-1}x_1^{-1}) }{\partial x_1}=1+x_1x_2-x_1x_2x_1x_2^{-1}x_1^{-1} ,\\
&\frac{\partial (x_1 \sigma_1^2 ) }{\partial x_2}=\frac{\partial(x_1x_2x_1x_2^{-1}x_1^{-1})}{\partial x_2}=x_1(1-x_2x_1x_2^{-1}).
\end{align*}
Consequently, applying $\psi_c$, one obtains the same matrix as above. 
\end{example}

\subsection{The reduced twisted Burau map}
\label{sub:reduced}
In this subsection, we shall generalize the definition of the reduced Burau  representation to the twisted setting.

\begin{proposition}
\label{prop:Birman}
Fix a basepoint~$z \in \partial D_n$. For each braid $\beta$, the twisted Burau map $\mathcal{B}_\rho(\beta)$ fixes a free submodule of $H_1^\rho(D_n,z;R[H]^k)$ of rank~$k$.
\end{proposition}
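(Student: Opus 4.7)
The plan is to produce the required submodule as the image of the twisted homology of the boundary circle under the inclusion $\iota\colon (\partial D_n, z) \hookrightarrow (D_n, z)$. The key geometric input is that any homeomorphism $h_\beta$ representing a colored braid $\beta \in B_c$ restricts to the identity on $\partial D_n$, so the induced action on the boundary's twisted homology is trivial.

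First, I would compute $H_1^\rho(\partial D_n, z; R[H]^k)$. Using the CW structure on $\partial D_n$ with a single $0$-cell $z$ and a single $1$-cell realizing the loop $\delta$, the relative chain complex has $C_0 = 0$ and $C_1^\rho \cong R[H]^k$, so $H_1^\rho(\partial D_n, z; R[H]^k)$ is free of rank $k$. Second, I would show that $\iota_*^\rho$ is injective: since $\delta$ represents the word $w = x_1 x_2 \cdots x_n$ in $\pi_1(D_n)$, Fox calculus gives
\[
\iota_*^\rho(u \otimes \tilde\delta) \;=\; \sum_j u \cdot (\rho \otimes \psi_c)\!\left(\frac{\partial w}{\partial x_j}\right) \otimes \tilde x_j
\]
for $u \in R[H]^k$. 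Because $\partial w/\partial x_1 = 1$, the first block of the good-basis coordinates equals $u \cdot I_k = u$, so the map has a left inverse (projection to the first $k$ coordinates) and its image $S \subset H_1^\rho(D_n, z; R[H]^k)$ is free of rank $k$.

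The last step is to verify that $\mathcal{B}_\rho(\beta)$ preserves $S$, which will follow from naturality. Since $\beta$ fixes $w$, the restrictions of $\rho$ and $\beta_*\rho$ to $\pi_1(\partial D_n)$ agree, canonically identifying $H_1^{\beta_*\rho}(\partial D_n, z; R[H]^k)$ with $H_1^\rho(\partial D_n, z; R[H]^k)$; moreover $h_\beta|_{\partial D_n} = \id$ induces the identity on this module. Functoriality of induced maps on twisted homology then yields $\mathcal{B}_\rho(\beta) \circ \iota_*^{\beta_*\rho} = \iota_*^\rho$, so $\mathcal{B}_\rho(\beta)$ sends the rank-$k$ submodule $\iota_*^{\beta_*\rho}(H_1^{\beta_*\rho}(\partial D_n, z; R[H]^k))$ of its domain onto $S$, which is the precise sense in which the Burau map fixes $S$. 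The main obstacle will be phrasing this last step correctly, since the domain and codomain of $\mathcal{B}_\rho(\beta)$ are a priori distinct twisted homology groups; once both are identified with $R[H]^{nk}$ via their good bases, the claim becomes the expected statement about the matrix $\mathcal{B}_\rho(\beta) \in GL_{nk}(R[H])$.
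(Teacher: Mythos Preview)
Your argument is correct, and in fact it singles out the very same submodule the paper does: the boundary loop $\delta$ represents $g_n := x_1 x_2 \cdots x_n$ in $\pi_1(D_n)$, so your $S = \iota_*^\rho\bigl(H_1^\rho(\partial D_n,z;R[H]^k)\bigr)$ is precisely $R[H]^k\,\tilde g_n$. The difference is packaging. The paper argues algebraically rather than topologically: it replaces the generators $x_1,\dots,x_n$ by $g_i = x_1\cdots x_i$, checks directly that the braid action fixes $g_n$, and thereby obtains the full splitting
\[
H_1^{\rho}(D_n,z;R[H]^k)\;=\;\bigoplus_{i=1}^{n-1} R[H]^k\,\tilde g_i \;\oplus\; R[H]^k\,\tilde g_n .
\]
That extra structure is the real payoff: the reduced Burau map is \emph{defined} as the restriction to the first summand, and Corollary~\ref{cor:reducedfox} records the block-triangular form in the $g_i$-basis. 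Your functoriality argument proves the proposition as stated but does not hand you a complement, so you would still need the $g_i$-basis (or an equivalent) before moving on. Your worry about the domain and codomain being different twisted homology groups is also exactly why the paper's change of basis is convenient: in the $g_i$-coordinates the submodule $R[H]^k\,\tilde g_n$ has the same description $\{(0,\dots,0,v)\}$ on both sides, whereas in the $x_i$-good bases your $S'$ and $S$ sit differently inside $R[H]^{nk}$.
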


\begin{proof}
Instead of working with the free generators~$x_1,x_2\dots,x_n$ of~$\pi_1(D_n),$ consider the elements~$g_1, g_2, \dots,g_n,$ where~$g_i=x_1 x_2 \cdots x_i$. The action of the braid group~$B_n$ on this new set of free generators for $\pi_1(D_n)$ is given by
\[
g_j  \sigma_i=
\begin{cases}
 g_j                            & \mbox{if }  j \neq i, \\ 
g_{i+1}g_{i}^{-1} g_{i-1}  & \mbox{if }  j=i \neq 1, \\
g_2 g_1^{-1}            & \mbox{if } j=i=1. \\ 
 \end{cases} 
\] 
Let~$\tilde{g}_i$ be the lift of~$g_i$ starting at a fixed lift of $z$. Using the same argument as in Lemma \ref{lem:RelativeTwisted}, one obtains the splitting 
$$H_1^{\rho }(D_n,z;R[H]^k) = \bigoplus_{i=1}^{n-1} R[H]^k \tilde{g}_i \oplus R[H]^k \tilde{g}_n.~$$
As~$g_n$ is always fixed by the action of the braid group, its lift~$\tilde{g}_n$ is fixed by the lift~$\tilde{h}_\beta$ of a homeomorphism $h_\beta$ representing a braid $\beta$. This concludes the proof of the proposition.
\end{proof}

\begin{definition}
The \textit{reduced twisted Burau map} 
$$ \overline{\mathcal{B}}_\rho\colon B_c \rightarrow GL_{(n-1)k}(R[H])$$
sends a braid~$\beta$ to the restriction~$\overline{\mathcal{B}}_\rho(\beta) \in GL_{(n-1)k}(R[H])$ of the twisted Burau map to the free~$R[H]$-module of rank~$k(n-1)$ given by  the proof of Proposition \ref{prop:Birman}. 
\end{definition} 

Proposition \ref{prop:Birman} immediately yields the following result.

\begin{corollary}
\label{cor:reducedfox}
If~$\tilde{\mathcal{B}}_\rho(\beta)$ denotes the twisted Burau matrix of a braid~$\beta$ with respect to the basis described in the proof of Proposition \ref{prop:Birman}, then 
\[ 
\tilde{\mathcal{B}}_\rho=
 \begin{pmatrix}
\overline{\mathcal{B}}_\rho(\beta) & V \\
0 & I_k
\end{pmatrix}   \]
for some $(k(n-1) \times k)$-matrix~$V$.
\end{corollary}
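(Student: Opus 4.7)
The corollary should follow almost immediately from Proposition \ref{prop:Birman}. The plan is to work in the ordered basis $\tilde{g}_1, \dots, \tilde{g}_{n-1}, \tilde{g}_n$ of $H_1^\rho(D_n, z; R[H]^k)$ supplied by the proof of that proposition (with $R[H]^k$-coefficients understood) and to simply read off the consequences of the invariance of $\tilde{g}_n$.

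First I recall that under the row-vector convention adopted in Subsection \ref{sub:twistedtorsion} and carried over into the proof of Proposition \ref{prop:BurauFox}, the $i$-th row of $\tilde{\mathcal{B}}_\rho(\beta)$ records the coefficients of the image of the $i$-th basis vector. Since every braid $\beta$ fixes $g_n = x_1 x_2 \cdots x_n$ (as follows from the action table in the proof of Proposition \ref{prop:Birman}), the lifted homeomorphism $\tilde{h}_\beta$ fixes $\tilde{g}_n$, and hence the last row of $\tilde{\mathcal{B}}_\rho(\beta)$ is $(0, \dots, 0, I_k)$ after applying $\rho \otimes \psi_c$. This yields at once the identity block in the bottom-right corner and the zero block immediately beneath $\overline{\mathcal{B}}_\rho(\beta)$. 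The top-left $(n-1)k \times (n-1)k$ block is, by construction of $\overline{\mathcal{B}}_\rho$, exactly the reduced twisted Burau matrix, and the remaining $(n-1)k \times k$ block $V$ is unconstrained.

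If a more explicit verification is desired, I would instead adapt the Fox calculus computation of Proposition \ref{prop:BurauFox} to the alternative free generating set $\{g_i\}_{i=1}^n$ of $\pi_1(D_n)$. This presents $\tilde{\mathcal{B}}_\rho(\beta)$ as the $(n \times n)$-matrix with $(i,j)$-entry $(\rho \otimes \psi_c)\bigl( \partial (g_i \beta)/\partial g_j \bigr) \in M_k(R[H])$. Specialising to $i = n$ and invoking $g_n \beta = g_n$ gives $\partial g_n / \partial g_j = \delta_{nj}$, which reproduces the same bottom row.

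No serious obstacle is expected: the statement is essentially a repackaging of the fixed-vector property of Proposition \ref{prop:Birman}. The only point demanding care is the row/column convention, which must be tracked so that the fixed basis vector $\tilde{g}_n$ produces a row (rather than a column) of zeros, landing the zero block in the lower-left corner and leaving $V$ as the unconstrained upper-right block.
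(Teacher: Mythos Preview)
Your proposal is correct and matches the paper's approach exactly: the paper states only that the corollary follows immediately from Proposition~\ref{prop:Birman}, and your argument spells out precisely why, tracking the row-vector convention so that the invariance of $\tilde{g}_n$ forces the bottom row to be $(0,\dots,0,I_k)$. Your optional Fox-calculus verification via $\partial(g_n\beta)/\partial g_j = \delta_{nj}$ is a nice sanity check but is not needed.
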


The reduced twisted Burau map also satisfies the property of Lemma \ref{lem:cocycle}. Moreover its definition may be easily adapted to include the case of $(c,c')$-braids.

\begin{example}
\label{ex:sigma1}
Combining Proposition \ref{prop:BurauFox} and Corollary \ref{cor:reducedfox}, the reduced twisted Burau map of~$\sigma_i$ (viewed as a $(c,c')$-colored braid) is given by
\begin{align*}
 \overline{\mathcal{B}}_\rho(\sigma_i)
 &=I_{(i-2)k} \oplus 
\begin{pmatrix}
I_k & 0 & 0 \\
\rho(g_{i+1}g_i^{-1})t_{c'_{i+1}} & -\rho(g_{i+1}g_i^{-1})t_{c'_{i+1}}   & I_k \\
0 & 0 & I_k
\end{pmatrix}
 \oplus I_{(n-i-2)k}
 \end{align*}
for~$1<i<n-1$, and for~$\sigma_1$ and~$\sigma_{n-1}$ it is represented by
\begin{align*}
 \overline{\mathcal{B}}_\rho(\sigma_1)
 &=
\begin{pmatrix}
 -\rho(g_{2}g_1^{-1})t_{c'_2}  & I_k \\
 0 & I_k
\end{pmatrix}
 \oplus I_{(n-3)k}, \\
 \overline{\mathcal{B}}_\rho(\sigma_{n-1})
 &=I_{(n-3)k} \oplus 
\begin{pmatrix}
I_k & 0  \\
\rho(g_{n}g_{n-1}^{-1})t_{c'_n} & -\rho(g_{n}g_{n-1}^{-1})t_{c'_n}   \\
\end{pmatrix}.
 \end{align*}
If~$\rho$ is the trivial one-dimensional representation,~$\mu=1$ and $R=\mathbb{Z}$, then one recovers the reduced Burau representation~$\overline{\mathcal{B}}_t$ mentioned in the introduction. 
\end{example}

\begin{example}
\label{ex:sigma13}
We will compute the twisted reduced Burau map of~$\sigma_1^3 \in B_2$ for any representation~$\rho$ in the case when $\mu=1$. Using Example \ref{ex:sigma1},~$\overline{\mathcal{B}}_\rho(\sigma_1)=-\rho(g_2g_1^{-1})t.$ As~$(g_2g_1^{-1})\sigma_1=g_2g_1g_2^{-1}$, it follows from Lemma \ref{lem:cocycle} that
$$\overline{\mathcal{B}}_\rho(\sigma_1^3)=\overline{\mathcal{B}}_{{\sigma_1}_*\rho}\overline{\mathcal{B}}_{{\sigma_1}_*\rho}(\sigma_1)\overline{\mathcal{B}}_{\rho}(\sigma_1)=-\rho(g_2g_1g_2^{-1})\rho(g_2g_1g_2^{-1})\rho(g_2g_1^{-1})t^3 =-\rho(g_2g_1)t^3,$$
and consequently one gets 
$\overline{\mathcal{B}}_\rho(\sigma_1^3)=-\rho(x_1x_2x_1)t^3.$
\end{example}

From the topological viewpoint, the definition of the reduced twisted Burau map is somewhat unsatisfactory compared to the unreduced version. Indeed the basis given by Proposition \ref{prop:Birman} is devoid of topological meaning. This motivates the search of a more intrinsic definition of the reduced map.  If $\beta \in B_c$, is a colored braid, we therefore consider the restriction 
$$\hat{\mathcal{B}}_\rho(\beta)\colon  H_1^{\beta_*\rho}(D_n;R[H]^k) \rightarrow H_1^{\rho}(D_n;R[H]^k)$$
of the twisted Burau map. Even though we do not know whether $H_1^{\beta_*\rho}(D_n;R[H]^k)$ is free, we do know its rank.

\begin{proposition}
Fix a basepoint~$z \in \partial D_n$. 
\label{cor:free}
\begin{enumerate}[(a)]
\item  The dimension of the $Q(H)$-vector space~$H_1^{\rho}(D_n;Q(H)^k)$ is~$k(n-1)$.
\item If~$\mu=1$ and $R$ is a principal ideal domain, then $H_1^{\rho }(D_n;R[H]^k)$ is a free $R[H]$-module of rank $k(n-1).$
\end{enumerate}
\end{proposition}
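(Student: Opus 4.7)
The plan is to analyse the two-term twisted chain complex $C_*^\rho(D_n;R[H]^k)$. Using the cell structure on $D_n$ from the proof of Lemma \ref{lem:RelativeTwisted} (one $0$-cell $z$ and $n$ oriented $1$-cells $x_1,\dots,x_n$), the boundary takes the form
\[
\partial_1\colon R[H]^{nk}=\bigoplus_{i=1}^{n} R[H]^k\,\tilde x_i \longrightarrow R[H]^k\,\tilde z = R[H]^k,
\]
given in the row-vector convention by right multiplication by the $nk\times k$ matrix with $k\times k$ blocks $M_i:=\rho(x_i)\,t_{c_i}-I_k$. Hence $H_1^\rho(D_n;R[H]^k)=\ker\partial_1$ and $H_0^\rho(D_n;R[H]^k)=\operatorname{coker}\partial_1$, and the whole proposition reduces to a study of this single map between free modules.

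For part (a), the key observation is that $\det(M_i)\in R[H]$ is a polynomial in $t_{c_i}$ whose value at $t_{c_i}=0$ is $\det(-I_k)=(-1)^k\neq 0$. Therefore $\det(M_i)$ is a nonzero element of the integral domain $R[H]$, and thus a unit in the fraction field $Q(H)$. Each block $M_i$ is then already invertible over $Q(H)$, so $\partial_1\otimes_{R[H]} Q(H)$ is surjective and $H_0^\rho(D_n;Q(H)^k)=0$. An Euler characteristic computation on the two-term complex $Q(H)^{nk}\to Q(H)^k$ then delivers $\dim_{Q(H)} H_1^\rho(D_n;Q(H)^k)=nk-k=(n-1)k$.

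For part (b), I specialise to $\mu=1$ with $R$ a principal ideal domain, so that $R[H]=R[t^{\pm 1}]$ is a regular Noetherian ring of global dimension at most~$2$. The exact sequence
\[
0 \to H_1^\rho(D_n;R[t^{\pm 1}]^k) \to R[t^{\pm 1}]^{nk} \stackrel{\partial_1}{\to} R[t^{\pm 1}]^k \to H_0^\rho(D_n;R[t^{\pm 1}]^k) \to 0
\]
is a length-two free resolution of $H_0^\rho(D_n;R[t^{\pm 1}]^k)$, which has projective dimension at most~$2$; a standard syzygy/Schanuel argument then forces the kernel $H_1^\rho(D_n;R[t^{\pm 1}]^k)$ to be a finitely generated projective $R[t^{\pm 1}]$-module. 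The Quillen--Suslin theorem, applied to the Laurent polynomial ring $R[t^{\pm 1}]$ over a PID, shows that every such projective is free, and the rank is forced to be $(n-1)k$ by part (a), since the rank of a free $R[t^{\pm 1}]$-module is preserved under the flat base change to $Q(H)$.

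The main obstacle is the freeness step in (b): arbitrary finitely generated submodules of free $R[t^{\pm 1}]$-modules need not be free (the ring is not B\'ezout when $R$ is not a field), so one genuinely needs both the finite global dimension of $R[t^{\pm 1}]$ and Quillen--Suslin to conclude. The restriction $\mu=1$ is essential, because for $\mu>1$ the ring $R[t_1^{\pm 1},\dots,t_\mu^{\pm 1}]$ has global dimension $\mu+1>2$, so the two-term resolution coming from the chain complex of $D_n$ no longer forces $H_1$ to be projective, let alone free.
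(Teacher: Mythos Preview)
Your proof is correct and follows essentially the same strategy as the paper. The paper phrases the argument via the long exact sequence of the pair $(D_n,z)$ rather than the two-term chain complex directly, but since $H_1^\rho(D_n,z;R[H]^k)\cong C_1^\rho(D_n;R[H]^k)$ and $H_0^\rho(z;R[H]^k)\cong C_0^\rho(D_n;R[H]^k)$ these are literally the same exact sequence; for (b) the paper likewise invokes global dimension~$2$ of $R[t^{\pm1}]$ to obtain projectivity and then Swan's theorem (your Quillen--Suslin) to conclude freeness.
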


\begin{proof}
Consider the portion
$$0 \rightarrow H_1^{\rho }(D_n;R[H]^k)  \rightarrow
H_1^{\rho }(D_n,z;R[H]^k) \rightarrow
H_0^{\rho }(z;R[H]^k) \rightarrow H_0^{\rho }(D_n;R[H]^k)$$
of the long exact sequence of the pair $(D_n,z).$ As~$H_1^{\rho}(D_n,z;R[H]^k)$ and~$H_0^{\rho}(z;R[H]^k)$ are free~$R[H]$-modules, and $H_0^{\rho }(D_n;R[H]^k)$ is torsion, the first assertion follows by taking the tensor product with $Q(H)$. The second assertion is an immediate consequence of the following algebraic claim: if $R$ is a principal ideal domain and one has a sequence of $R[t^{\pm 1}]$-modules
$$ 0 \rightarrow K \rightarrow P \rightarrow F,$$
where $P$ and $F$ are free and finitely-generated, then $K$ is also free. To prove the claim, first note that since $R$ is principal, the ring $R[t^{\pm 1}]$ has global dimension $2$ \cite[Theorem $4.3.7$]{Weibel}. Following word for word the proof of \citep[Lemma $3.7$]{CT} (with $R$ instead of $\mathbb{Z}$), it then follows that $K$ is projective. Since $P$ is finitely-generated over the Noetherian ring $R[t^{\pm 1}]$, $K$ is also finitely generated. The conclusion now follows from the fact that if $R$ is a principal ideal domain, then every finitely-generated projective $R[t^{\pm 1}]$-module is free \cite{Swan}.
\end{proof}

\begin{example}
\label{ex:HomolBurau}
Assume that $R=\mathbb{Z}$ and~$\rho$ is the trivial one-dimensional representation. When $\mu=1$, let $D_n^\infty$ be the infinite cyclic covering of $D_n$ corresponding to the kernel of the map $\psi: \pi_1(D_n) \rightarrow \mathbb{Z}=\langle t \rangle$ sending each meridian to $t$. Using Remark \ref{rem:shapiro}, $H_1^{\rho}(D_n;R[H]^k)$ is isomorphic to $H_1(D_n^\infty,\mathbb{Z})$ and the latter is a free $\mathbb{Z}[t^{\pm 1}]$-module of rank $n-1$. In this setting (with respect to appropriate bases), $\hat{\mathcal{B}}_\rho(\beta) $ coincides with the reduced Burau representation (\citep{Bigelow, LongPaton, TuraevFaithful, CT}). On the other hand when $\mu=n$ and $c=(1,2,\dots,n)$, $\hat{\mathcal{B}}_\rho(\beta)$ coincides with the reduced Gassner representation of the pure braid $\beta$ (\citep{KL,CC}).
\end{example}

\begin{remark}
Squier \citep{Squier} observed (via an algebraic computation) that the reduced Burau representation is unitary with respect to a skew-Hermitian form (see also Abdulrahim \citep{Abdulrahim} for a similar observation concerning the Gassner representation). Using the homological description outlined in Example \ref{ex:HomolBurau}, it was later understood (\citep{KL, TuraevFaithful, CT}) that the above-mentioned skew-hermitian form arises from an intersection pairing on $D_n^\infty$. 

If $R$ is a ring with involution and $\rho$ is a unitary representation, then the twisted intersection form (\citep{Levine,KL, Friedl-Cha}) on the free part of $H_1^\rho(D_n;R[H]^k)$ allows us to generalize this observation to the twisted setting. Indeed as the equivariant intersection pairing on the universal covering is preserved by homeomorphisms, the homomorphism~$\hat{\mathcal{B}}_\rho(\beta)$ intertwines the twisted intersection pairings on~$H_1^{\beta_*\rho}(D_n;R[H]^k)$ and~$H_1^{\rho}(D_n;R[H]^k)$.
\end{remark}

In view of Example \ref{ex:HomolBurau}, it is tempting to conclude that $\hat{\mathcal{B}}_\rho$ is equal to the reduced twisted Burau map. Unfortunately, for $k>1$, even if $H_1^{\rho}(D_n;R[H]^k)$ is a free $R[H]$-module (as in Corollary \ref{cor:free}), there is no obvious basis from which one may compute a matrix of $\hat{\mathcal{B}}_\rho$.

\subsection{Relation to the twisted Alexander polynomial}
\label{sub:thm}

Generalizing an idea of Morton \citep{Morton}, we show how the twisted Alexander polynomial can be computed from the reduced twisted Burau map.

\begin{theorem}
\label{thm:main}
Let~$F_n$ be the free group on~$x_1, x_2,\dots, x_n$ and let~$\beta \in B_c$ be a~$\mu$-colored braid  with~$n$ strands. If~$\rho\colon F_n \rightarrow GL_k(R)$ is a representation which extends to~$\pi_1(S^3 \setminus \hat{\beta})$, then
\[
\tau^{\rho}(\hat{\beta})(t_1,t_2,\dots,t_\mu) \det\left(\rho(x_1 x_2 \cdots x_n) t_{c_1}t_{c_2}\cdots t_{c_n}-I_{k}\right)= \pm dh \ \det(\overline{\mathcal{B}}_\rho(\beta)-I_{(n-1)k}),
 \]
for some $d\in \det(\rho(\pi_1(S^3 \setminus \hat{\beta})))$ and $h \in H.$
\end{theorem}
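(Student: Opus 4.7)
The plan is to compute the twisted torsion of $\hat{\beta}$ via Wada's Fox-calculus formula applied to a presentation of $\pi_1(S^3\setminus\hat{\beta})$ built directly from the braid, and then to translate the resulting matrix into the reduced Burau map using Corollary \ref{cor:reducedfox}. This is a twisted analogue of Morton's classical argument, the new ingredient being that a carefully chosen generating set of $F_n$ makes both the presentation and the block structure of the Burau matrix fall into place simultaneously.

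First I would set up the presentation. Because $\rho$ extends to $\pi_1(S^3\setminus\hat{\beta})$, Subsection \ref{sub:braids} supplies a presentation with generators $x_1,\dots,x_n$ and $n$ relators $x_i=x_i\beta$. Rather than work in the $x$-basis, I would change to the free generating set $g_1,\dots,g_n$ of $F_n$ with $g_i=x_1\cdots x_i$; the relators become $g_i=g_i\beta$. By the action formula established in the proof of Proposition \ref{prop:Birman}, $g_n$ is fixed by the $B_n$-action on $F_n$, so the relator $g_n=g_n\beta$ is \emph{trivially} true already in $F_n$ and can be dropped. One thus obtains a deficiency-one presentation with generators $g_1,\dots,g_n$ and relators $r_i=g_i(g_i\beta)^{-1}$ for $i=1,\dots,n-1$, to which Wada's formula applies.

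Next I would compute the associated Fox-derivative matrix $A$. Using the identity from Subsection \ref{sub:twistedtorsion} for derivatives of $rs^{-1}$ together with the $g$-basis analogue of Proposition \ref{prop:BurauFox} (which follows by the very same Fox-calculus argument, with $\tilde{g}_j$ replacing $\tilde{x}_j$), the $(i,j)$-block of $A$ is
\[
(\rho\otimes\psi_c)\!\left(\frac{\partial(g_i-g_i\beta)}{\partial g_j}\right)=\delta_{ij}I_k-(\rho\otimes\psi_c)\!\left(\frac{\partial(g_i\beta)}{\partial g_j}\right),
\]
so $A$ is precisely the first $n-1$ block rows of $I_{nk}-\tilde{\mathcal{B}}_\rho(\beta)$, where $\tilde{\mathcal{B}}_\rho(\beta)$ is the unreduced twisted Burau matrix in the $g$-basis. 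Corollary \ref{cor:reducedfox} then identifies $A=\bigl(\,I_{(n-1)k}-\overline{\mathcal{B}}_\rho(\beta)\;\;\big|\;-V\,\bigr)$. Applying Wada's formula with the $n$-th block of columns removed gives
\[
W(\hat{\beta})=\frac{\det\!\bigl(I_{(n-1)k}-\overline{\mathcal{B}}_\rho(\beta)\bigr)}{\det\!\bigl((\rho\otimes\psi_c)(g_n-1)\bigr)}=\frac{\pm\det\!\bigl(\overline{\mathcal{B}}_\rho(\beta)-I_{(n-1)k}\bigr)}{\det\!\bigl(\rho(x_1x_2\cdots x_n)\,t_{c_1}t_{c_2}\cdots t_{c_n}-I_k\bigr)}.
\]

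Finally, Kitano's equality $W(\hat{\beta})=\pm dh\,\tau^\rho(\hat{\beta})$ (recalled in Subsection \ref{sub:twistedtorsion}) cross-multiplies to the advertised formula, the global sign being absorbed into the $\pm$. The one genuinely delicate point is that the $n$-relator Wirtinger-type presentation coming from the braid really does drop to a deficiency-one presentation in the $g$-basis and that Wada's formula may legitimately be applied there; the cleanness of this reduction rests on the fact that $g_n\beta=g_n$ holds in $F_n$ itself and not merely in the quotient, which is exactly what Proposition \ref{prop:Birman}'s action computation provides.
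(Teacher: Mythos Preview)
Your argument is correct and is a genuinely more direct route than the paper's. The paper does not apply Wada's formula to $\pi_1(S^3\setminus\hat{\beta})$ itself; instead it passes to the auxiliary link $\hat{\beta}\cup\partial D_c$, obtains a deficiency-one presentation $\langle g_1,\dots,g_n,x\mid x^{-1}g_ix=g_i\beta\rangle$ with the extra meridian $x$ and an extra variable $t_{\mu+1}$, computes $\tau^\rho(\hat{\beta}\cup\partial D_c)$ via Wada, and then recovers $\tau^\rho(\hat{\beta})$ by the multiplicativity of torsion (Proposition~\ref{prop:multtorsion}) for the decomposition $X_{\hat{\beta}}=X_{\hat{\beta}\cup\partial D_c}\cup(D^2\times\partial D_c)$ followed by the specialization $t_{\mu+1}=1$. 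Your observation that $g_n\beta=g_n$ already in $F_n$ lets you drop the $n$-th relator and work directly with a deficiency-one presentation of $\pi_1(S^3\setminus\hat{\beta})$, bypassing the auxiliary link, the torsion short exact sequence, and the extra variable entirely. What the paper's detour buys is exactly what Remark~\ref{rem:Torres} records: the intermediate equality $\tau^{\rho}(\hat{\beta})\det(\rho(g_n)\psi_c(g_n)-I_k)=\tau^{\rho}(\hat{\beta}\cup\partial D_c)|_{t_{\mu+1}=1}$ is itself a twisted Torres formula, so the longer argument yields that corollary for free. Your approach trades that byproduct for a cleaner, purely algebraic proof of the theorem as stated.
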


\begin{proof}
Let~$X_\beta$ be the exterior of a braid~$\beta$ in the cylinder~$D^2 \times [0,1].$ The manifold obtained by gluing~$X_\beta$ and~$X_{id_c}$ along~$D_c \sqcup D_c$ is nothing but the exterior of the link~$\hat{\beta} \cup \partial D_c$ in $S^3$. Consequently the exterior~$X_{\hat{\beta}}$ of~$\hat{\beta}$ can be obtained by gluing the solid torus~$D^2 \times \partial D_c$ to~$X_{\hat{\beta} \cup \partial D_c}$ along~$\partial D^2\times \partial D_c$. Identify the free group $F_n$ with $\pi_1(D_c)$ so that the free generators $x_i$ correspond to the loops described in Subsection (\ref{sub:braids}). As in Subsection (\ref{sub:reduced}), the elements $g_1,g_2,\dots,g_n$ then also form a free generating set of~$\pi_1(D_c)$. If $x$ is a meridian of $\partial D_c$, then van Kampen's theorem implies that~$\pi_1(X_{\hat{\beta} \cup \partial D_c})$ admits a presentation where the $n+1$ generators $g_1,g_2,\dots,g_n,x$  are subject to the $n$ relations  $x^{-1} g_i x=g_i \beta$.  The representation~$\rho$ extends to~$\pi_1(X_{\hat{\beta} \cup \partial D_c})$ by setting $\rho(x)=I_k.$ A second application of van Kampen's theorem ensures that this extension of $\rho$ coincides with the representation induced by $\rho$ on $\pi_1(S^3\setminus \hat{\beta}).$ 

Since the generator of~$\pi_1(D^2 \times \partial D_c)$ is~$g_n$, the chain complex~$C_*^\rho(D^2 \times \partial D_c;Q(H)^k)$ is acyclic and the twisted torsion of $D^2 \times \partial D_c$ is equal to~$1/ \det(\rho(g_n)\psi_c(g_n)-I_k)$. Using excision, one observes that $H_2^\rho(X_{\hat{\beta}},X_{\hat{\beta} \cup \partial D_c};R[H]^k)$ is torsion and $H_1^\rho(X_{\hat{\beta}},X_{\hat{\beta} \cup \partial D_c};R[H]^k)$ vanishes. Consequently, the long exact sequence of the pair $(X_{\hat{\beta}},X_{\hat{\beta} \cup \partial D_c})$ with coefficients in $Q(H)^k$ reduces to 
$$ 0  \rightarrow H_1^{\rho}(X_{\hat{\beta} \cup \partial D_c};Q(H)^k)\rightarrow H_1^{\rho}(X_{\hat{\beta}};Q(H)^k)\rightarrow 0.$$
Therefore, if $H_1^\rho(X_{\hat{\beta}};R[H]^k)$ is torsion, then so is $H_1^\rho(X_{\hat{\beta} \cup \partial D_c};R[H]^k)$. In this case, Lemma \ref{lem:nonzero} implies that the twisted chain complexes of $X_{\hat{\beta}}$ and $X_{\hat{\beta} \cup \partial D_c}$ are acyclic over $Q(H)^k.$ Applying Proposition \ref{prop:multtorsion} to the short exact sequence of chain complexes resulting from the decomposition $X_{\hat{\beta}}=X_{\hat{\beta} \cup \partial D_c} \cup (D^2 \times \partial D_c)$ yields
\begin{equation*}
\tau^{\rho \otimes \psi_c}(\hat{\beta})(t_1,\dots,t_\mu) \det\left(\rho(x_1 x_2 \cdots x_n) t_{c_1}t_{c_2}\cdots t_{c_n}-I_{k}\right)=\tau^{\rho \otimes \psi_{c}}(\hat{\beta} \cup \partial D_c)(t_1,\dots,t_\mu).
\end{equation*}
Let us now color the trivial knot~$\partial D_c$ so that~$\hat{\beta} \cup \partial D_c$ becomes a~$\mu+1$-colored link via a sequence~$c'$. If $H'$ is the free abelian group on $t_1,\dots,t_\mu,t_{\mu+1}$ and $\psi_{c'}$ is the homomorphism which coincides with $\psi_c$ on the $g_i$ and  sends $x$ to $t_{\mu+1}$, then one obtains
$$\tau^{\rho \otimes \psi_c}(\hat{\beta} \cup \partial D_c)(t_1,\dots,t_\mu)=\tau^{\rho \otimes \psi_{c'}}(\hat{\beta} \cup \partial D_c)(t_1,\dots,t_\mu,1).$$
Let $A$ be the $(n \times (n+1))$ matrix obtained by performing Fox calculus on the previously described deficiency one presentation of $\pi_1(X_{\hat{\beta} \cup \partial D_c})$. A short computation shows that
\[ \frac{\partial( g_i \beta -x^{-1}g_i x)}{\partial g_j}=\frac{\partial (g_i \beta)}{\partial g_j}-x^{-1}\delta_{ij}. \]
Consequently, using Corollary \ref{cor:reducedfox}, the~$(nk \times nk)$ matrix resulting from the deletion of the~$(n+1)$-th column of $A$ is 
\[ 
A_{n+1}=\begin{pmatrix}
\overline{\mathcal{B}}_\rho(\beta)-t_{\mu+1}^{-1}I_{(n-1)k} & V \\
0 & I_k(1-t_{\mu+1}^{-1})
\end{pmatrix}   \]
for some $(k(n-1) \times k)$-matrix~$V$. As $A_{n+1}$ is an upper triangular block matrix, its determinant is the product of the diagonal blocks. Using Wada's characterization of the twisted torsion and simplifying the~$ I_k(1-t_{\mu+1}^{-1})$ terms, the twisted torsion of $\hat{\beta} \cup \partial D_c$ is equal (up to the indeterminacy of the twisted torsion) to $\det\left(\overline{\mathcal{B}}_\rho(\beta)-t_{\mu+1}^{-1}I_{(n-1)k}\right)$. This concludes the proof when $H_1^\rho(X_{\hat{\beta}};R[H]^k)$ is torsion.

Finally, if $H_1^\rho(X_{\hat{\beta}};R[H]^k)$ is not torsion, then neither is $H_1^\rho(X_{\hat{\beta} \cup \partial D_c};R[H]^k)$ and the theorem holds trivially. 
\end{proof}

\begin{remark}
\label{rem:Torres}
The argument in the proof of Theorem \ref{thm:main} leads to an alternative proof of the twisted generalization of the Torres formula obtained by Morifuji \citep{Morifuji}.
\end{remark}

We conclude with an example of Theorem \ref{thm:main}.

\begin{example}
Assume that~$\mu=1$ and consider the braid~$\sigma_1^3 \in B_2$  whose closure is the trefoil knot~$T$. Let~$\rho: F_2 \rightarrow GL_2(\mathbb{Z}[s^{\pm 1}])$ be the representation given by 
\[
 \rho(x_1)=
\begin{pmatrix}
-s & 1 \\
0 & 1
\end{pmatrix}
, \ \
 \rho(x_2)=
\begin{pmatrix}
1 & 0 \\
s & -s
\end{pmatrix}.
\]
Using Example \ref{ex:sigma13}, we can compute the reduced twisted Burau map of the braid~$\sigma_1^3$ with respect to~$\rho$:
$$ \overline{\mathcal{B}}_\rho(\sigma_1^3)=-\rho(x_1)\rho(x_2)\rho(x_1)t^3=
\begin{pmatrix}
0 & s t^3 \\
s^2t^3 & 0
\end{pmatrix}.
$$
Consequently, one obtains 
$$ \det(\overline{\mathcal{B}}_\rho(\sigma_1^3)-I_{2})=
\det \left( \begin{pmatrix}
0 & st^3 \\
s^2t^3 & 0
\end{pmatrix}
-
\begin{pmatrix}
 1 & 0 \\
 0 & 1 \\
\end{pmatrix}
 \right)
=1-s^3t^6,
$$
and 
$$
\det(\rho(x_1) \rho(x_2)t^2-I_2)
=
\det \left( 
\begin{pmatrix}
-s & 1  \\
0 & 1 & 
\end{pmatrix}
\begin{pmatrix}
 1 & 0 \\
 s & -s \\
\end{pmatrix} t^2
-
\begin{pmatrix}
 1 & 0 \\
 0 & 1 \\
\end{pmatrix}
\right)
=1+st^2+s^2t^4.$$
The representation~$\rho$ extends to a representation of $\pi_1(S^3\setminus T)$ and Theorem \ref{thm:main} shows that (up to the indeterminacy of the twisted torsion)
$$\tau^{\rho}(\widehat{\sigma_1^3})(t)=\frac{1-s^3t^6}{1+st^2+s^2t^4}=1-st^2,$$
which coincides with the computation of Example \ref{ex:trefoil}.
\end{example}

\bibliographystyle{plain}
\nocite{*}
\bibliography{BibliographieTwisted5}

\end{document}